\newtheorem{theorem}{{\sc Theorem}}[section]
\newtheorem{lemma}[theorem]{{\sc Lemma}}
\newtheorem{remark}[theorem]{Remark}
\newcommand{\dif}[2]{\displaystyle\frac{\partial #1}{\partial #2}}
\def\XXint#1#2#3{{\setbox0=\hbox{$#1{#2#3}{\int}$ }
\vcenter{\hbox{$#2#3$ }}\kern-.6\wd0}}
\newcommand{\Gk}{\kappa}
\newcommand{\Gth}{\theta}
\newcommand{\GG}{\Gamma}
\bmdefine\BGa{\alpha}
\bmdefine\BGb{\beta}
\bmdefine\BGd{\delta}
\bmdefine\BGe{\epsilon}
\bmdefine\BGve{\varepsilon}
\bmdefine\BGf{\phi}
\bmdefine\BGvf{\varphi}
\bmdefine\BGg{\gamma}
\bmdefine\BGc{\chi}
\bmdefine\BGi{\iota}
\bmdefine\BGk{\kappa}
\bmdefine\BGl{\lambda}
\bmdefine\BGn{\eta}
\bmdefine\BGm{\mu}
\bmdefine\BGv{\nu}
\bmdefine\BGp{\pi}
\bmdefine\BGth{\theta}
\bmdefine\BGvth{\vartheta}
\bmdefine\BGr{\rho}
\bmdefine\BGvr{\varrho}
\bmdefine\BGs{\sigma}
\bmdefine\BGvs{\varsigma}
\bmdefine\BGt{\tau}
\bmdefine\BGj{\tau}
\bmdefine\BGu{\upsilon}
\bmdefine\BGo{\omega}
\bmdefine\BGx{\xi}
\bmdefine\BGy{\psi}
\bmdefine\BGz{\zeta}
\bmdefine\BGD{\Delta}
\bmdefine\BGF{\Phi}
\bmdefine\BGG{\Gamma}
\bmdefine\BGL{\Lambda}
\bmdefine\BGP{\Pi}
\bmdefine\BGT{\Theta}
\bmdefine\BGS{\Sigma}
\bmdefine\BGU{\Upsilon}
\bmdefine\BGO{\Omega}
\bmdefine\BGX{\Xi}
\bmdefine\BGY{\Psi}
\bmdefine\BCA{{\mathcal A}}
\bmdefine\BCB{{\mathcal B}}
\bmdefine\BCC{{\mathcal C}}
\bmdefine\BCD{{\mathcal D}}
\bmdefine\BCE{{\mathcal E}}
\bmdefine\BCF{{\mathcal F}}
\bmdefine\BCG{{\mathcal G}}
\bmdefine\BCH{{\mathcal H}}
\bmdefine\BCI{{\mathcal I}}
\bmdefine\BCJ{{\mathcal J}}
\bmdefine\BCK{{\mathcal K}}
\bmdefine\BCL{{\mathcal L}}
\bmdefine\BCM{{\mathcal M}}
\bmdefine\BCN{{\mathcal N}}
\bmdefine\BCO{{\mathcal O}}
\bmdefine\BCP{{\mathcal P}}
\bmdefine\BCQ{{\mathcal Q}}
\bmdefine\BCR{{\mathcal R}}
\bmdefine\BCS{{\mathcal S}}
\bmdefine\BCT{{\mathcal T}}
\bmdefine\BCU{{\mathcal U}}
\bmdefine\BCV{{\mathcal V}}
\bmdefine\BCW{{\mathcal W}}
\bmdefine\BCX{{\mathcal X}}
\bmdefine\BCY{{\mathcal Y}}
\bmdefine\BCZ{{\mathcal Z}}
\bmdefine\Bzr{ 0}
\bmdefine\Ba{ a}
\bmdefine\Bb{ b}
\bmdefine\Bc{ c}
\bmdefine\Bd{ d}
\bmdefine\Be{ e}
\bmdefine\Bf{ f}
\bmdefine\Bg{ g}
\bmdefine\Bh{ h}
\bmdefine\Bi{ i}
\bmdefine\Bj{ j}
\bmdefine\Bk{ k}
\bmdefine\Bl{ l}
\bmdefine\Bm{ m}
\bmdefine\Bn{ n}
\bmdefine\Bo{ o}
\bmdefine\Bp{ p}
\bmdefine\Bq{ q}
\bmdefine\Br{ r}
\bmdefine\Bs{ s}
\bmdefine\Bt{ t}
\bmdefine\Bu{ u}
\bmdefine\Bv{ v}
\bmdefine\Bw{ w}
\bmdefine\Bx{ x}
\bmdefine\By{ y}
\bmdefine\Bz{ z}
\bmdefine\BA{ A}
\bmdefine\BB{ B}
\bmdefine\BC{ C}
\bmdefine\BD{ D}
\bmdefine\BE{ E}
\bmdefine\BF{ F}
\bmdefine\BG{ G}
\bmdefine\BH{ H}
\bmdefine\BI{ I}
\bmdefine\BJ{ J}
\bmdefine\BK{ K}
\bmdefine\BL{ L}
\bmdefine\BM{ M}
\bmdefine\BN{ N}
\bmdefine\BO{ O}
\bmdefine\BP{ P}
\bmdefine\BQ{ Q}
\bmdefine\BR{ R}
\bmdefine\BS{ S}
\bmdefine\BT{ T}
\bmdefine\BU{ U}
\bmdefine\BV{ V}
\bmdefine\BW{ W}
\bmdefine\BX{ X}
\bmdefine\BY{ Y}
\bmdefine\BZ{ Z}
\date{}
\begin{document}
\title{Rigidity of a thin domain depends on the curvature, width, and boundary conditions}
\author{Zh. Avetisyan\thanks{University of California Santa Barbara, harutyunyan@math.ucsb.edu}, D. Harutyunyan\thanks{University of California Santa Barbara, z.avetisyan@math.ucsb.edu},
and N. Hovsepyan\thanks{Temple University, narek.hovsepyan@temple.edu}}

\maketitle

\begin{abstract}
This paper is concerned with the study of linear geometric rigidity of shallow thin domains under zero Dirichlet boundary conditions on the displacement field on the thin edge of the domain. 
A shallow thin domain is a thin domain that has in-plane dimensions of order $O(1)$ and $\epsilon,$ where $\epsilon\in (h,1)$ is a parameter (here $h$ is the thickness of the shell). The problem has been solved in [8,10] for the case $\epsilon=1,$ with the outcome of the optimal constant $C\sim h^{-3/2},$ $C\sim h^{-4/3},$ and $C\sim h^{-1}$ for parabolic, hyperbolic and elliptic thin domains respectively. We prove in the present work that in fact there are two distinctive scaling regimes $\epsilon\in (h,\sqrt h]$ and $\epsilon\in (\sqrt h,1),$ such that in each of which the thin domain rigidity is given by a certain formula in $h$ and $\epsilon.$ An interesting new phenomenon is that in the first (small parameter) regime $\epsilon\in (h,\sqrt h]$, the rigidity does not depend on the curvature of the thin domain mid-surface. 
  
\end{abstract}

\tableofcontents

\section{Introduction}
\label{sec:1}
Given a small parameter $h>0$, a shell of thickness $h$ in the theory of elasticity is the $h-$neighborhood (in the normal direction) of the a smooth enough connected and compact 
surface $S\subset\mathbb R^3.$ Denoting the normal vector (note that the orientation does not matter here) by $\Bn(x)$ for any point $x\in S,$ and assuming that 
the family of Lipschitz functions $g_1^h(x),g_2^h(x)\colon S\to (0,\infty)$ satisfy the uniform conditions
\begin{equation}
\label{1.1}
h\leq g_1^h(x),g_2^h(x)\leq c_1 h,\quad \text{and}\quad |\nabla g_1^h(x)|+|\nabla g_2^h(x)|\leq c_2h,\quad\text{for all}\quad x\in S,
\end{equation}
one defines a thin domain around $S$ and having thickness of order $h,$ to be the set $\Omega^h=\{x+t\Bn(x) \ : \ x\in S,\ t\in (-g_1^h(x),g_2^h(x))\}.$
The problem of determining the rigidity of a given shell or thin domain is one of the main challenges in the theories of linear and nonlinear elasticity. A mathematical definition of the geometric rigidity of a shell is formulated by means of the the geometric rigidity estimate of Friesecke, James and M\"uller [\ref{bib:Fri.Jam.Mue.1},\ref{bib:Fri.Jam.Mue.2}], which reads as follows: 
\textit{Let $\Omega\subset\mathbb R^3$ be open bounded connected and Lipschitz. There exists a constant $C=C(\Omega)$ such that for every vector field $\Bu\in H^1(\Omega)$ there exists a constant rotation $\BR\in SO(3)$, such that}
\begin{equation}
\label{1.2}
\|\nabla\Bu-\BR\|_{L^2(\Omega)}^2\leq C\int_\Omega\mathrm{dist}^2(\nabla\Bu(x),SO(3))dx.
\end{equation}
It is a well-known fact in elasticity theory that for thin domains $\Omega,$ the constant $C$ in (\ref{1.2}) blows up as the thickness $h$ of the domain $\Omega$ goes to zero. The optimal 
constant in (\ref{1.2}) then determines the rigidity of $\Omega,$ the bigger the constant $C$ is the less rigid the domain $\Omega$ is. It turns out that this mathematical definition goes well 
hand-in-hand with the physical understanding of rigidity, namely, typically mathematically more rigid thin domains handle higher compressive loads before buckling. 
The form (\ref{1.2}) comes from nonlinear elasticity, where the energy well is the group of proper rotations $SO(3).$ It is usually assumed in the study of shells in nonlinear elasticity
that the elastic energy density $W$ satisfies the lower bound $W(\BF)\geq c\cdot\mathrm{dist}^2(\BF,SO(3))$ for some $c>0$ and all $\BF\in\mathbb R^{3\times 3}.$ If $\Omega^h$ is a family of thin domains of thickness of order $h,$ the for low-energy\footnote{Low means asymptotically small as $h\to 0$.} (small enough energies) displacements $\Bu^h,$ the estimate (\ref{1.2}) will imply relative weak compactness of the displacement family $\{\Bu^h\}$ in $H^1,$ and strong compactness of a subsequence of $\{\Bu^h\}$ in $L^2.$ This being said, the derivation of shell theories from three dimensional elasticity is basically based on the geometric rigidity estimate (\ref{1.2}). Hence, in that analysis the asymptotics of the optimal constant $C$ in (\ref{1.2}) becomes crucial, as it identifies the energy scaling ranges (in terms of $h$) for the dimension reduction theory to hold. This is addressed as the shell rigidity problem. As the tangent space to the manifold $SO(3)$ at the identity matrix $\BI$ is the vector space of skew-symmetric matrices, the linearization of (\ref{1.2}) around $\BI$ is exactly classical Korn's first inequality without boundary conditions [\ref{bib:Korn.1},\ref{bib:Korn.2},\ref{bib:Friedrichs},\ref{bib:Kohn}], which reads as follows: \textit{Let $\Omega\subset\mathbb R^n$ be open bounded connected and Lipschitz. There exists a constant $C_1=C_1(\Omega),$ depending only on $\Omega,$ such that for every vector field $\Bu\in H^1(\Omega)$ there exists a skew-symmetric matrix $\BA\in \mathbb R^{n\times n}$ such that 
\begin{equation}
\label{1.3}
\|\nabla\Bu-\BA\|_{L^2(\Omega)}^2\leq C_1\|e(\Bu)\|_{L^2(\Omega)}^2,
\end{equation}
where $e(\Bu)=\frac{1}{2}(\nabla\Bu+\nabla\Bu^T)$ is the symmetric part of the gradient (the strain in linear elasticity).} 
It is known that if in addition one imposes Dirichlet type boundary condition on the displacement field $\Bu$ on some positive Hausdorff $H^2$-measure part of the 
boundary $\partial\Omega$ then one can choose $\BA=0$ in (\ref{1.3}). This version of (\ref{1.3}) plays a crucial role in the study of critical buckling loads for shells 
(and thin structures in general) under compression [\ref{bib:Gra.Tru.},\ref{bib:Gra.Har.1},\ref{bib:Gra.Har.2},\ref{bib:Gra.Har.3}]. In particular, the authors in [\ref{bib:Gra.Har.2},\ref{bib:Gra.Har.3}] 
prove a buckling load formula for slender structures under compression under some stability conditions on the deformation, where the formula involves the "best"\footnote{"Best" meaning asymptocically optimal as $h\to 0$.} constant $C_1$ in (\ref{1.3}), see [\ref{bib:Gra.Har.3}, Theorem~2.6]. In this work we will be studying (\ref{1.3}), namely the asymptotics of the optimal 
constant $C_1$ under zero Dirichlet boundary conditions on $\Bu$ over the thin edge of $\partial\Omega.$ This problem has been solved in [\ref{bib:Gra.Har.4}] and [\ref{bib:Harutyunyan.2}] 
for parabolic and elliptic, and hyperbolic shells respectively, in the case when the thin domain $\Omega$ has in-plane dimensions of order one in booth principal directions on the mid-surface $S.$
In this paper we will consider the case when $S$ has width of order one in one principal direction $\Gth,$ and of order $\epsilon>0$ in the other principal direction $z,$ where 
$\epsilon\in[h,1]$ is a parameter. We call this kind of thin domains shallow. Interestingly enough, we discover that there are two scaling regimes with a crossover $\tilde h=\sqrt{h}$ distinguishing between the formulae for the "best" constant $C_1$ in (\ref{1.3}) for all three kinds of shells, parabolic, hyperbolic, and elliptic. In the first regime 
$\epsilon\in [h,\sqrt{h}]$ the formula for $C_1$ does not depend on the Gaussian curvature of $S$ and is given by $C_1\sim \epsilon^2h^{-2},$ see Theorem~\ref{th:2.1}.  In the second regime 
$\epsilon\in [\sqrt{h},1]$ the "best" constant $C_1$ does in contrast depend on the Gaussian curvature of $S$ and is given by $C_1\sim \epsilon h^{-3/2},$ $C_1\sim \epsilon^{4/3}h^{-4/3},$ and 
$C_1\sim h^{-1}$ for parabolic, hyperbolic and elliptic shells respectively, see Theorem~\ref{th:2.2}. These recover the results in  [\ref{bib:Gra.Har.4},\ref{bib:Harutyunyan.2}] taking $\epsilon=1.$ 
Another interesting physical phenomenon is that in the case of elliptic shells, the entire range $\epsilon\in [\sqrt{h},1]$ gives the same scaling $C_1\sim h^{-1}.$ Also, we will prove that 
when the zero boundary conditions are removed, the in the regime $\epsilon\in [h,\sqrt{h}]$ the rigidity drops to $h^{-2}$ in some cases. \textit{This observation 
reveals the (explicit) dependence of the rigidity on the size, curvature, and boundary conditions. }

\section{Main results}
\setcounter{equation}{0}
\label{sec:2}

In this section we will introduce the main notation, definitions, and formulate the main results of the paper. First of all we will assume that the mid-surface $S\subset \mathbb R^3$ is a compact, connected, regular $C^3$ surface with nonempty relative interior, that can be given by a single patch parametrization $\Br=\Br(\Gth,z)$ in the principal variables $\Gth$ and $z,$ where we assume $\Gth\in [0,1]$ and $z\in [z_1(\Gth),z_2(\Gth)]$ for every $\Gth\in [0,1].$ 
Let $E$ denote the patch: $E=\{(\Gth,z) \ : \ \Gth\in[0,1] \ z\in [z_1(\Gth),z_2(\Gth)]\}.$ In order for $S$ to have roughly size $\epsilon$ in the $z-$direction, we impose the below condition on $z_1(\Gth)$ and $z_2(\Gth):$
\begin{equation}
\label{2.1}
0\leq z_1(\Gth),\quad \epsilon\leq z_2(\Gth)-z_1(\Gth)\leq c_3\epsilon\quad\text{for all}\quad \Gth\in [0,1],
\end{equation}
for some constant $c_3\geq 1.$ Let now $\Bn(\Gth,z)$ be the unit normal\footnote{The surface $S$ need not be orientable hare, as the choice of $\Bn$ or $-\Bn$ does not affect the presentation.} to $S$ at $(\Gth,z),$ and let $t$ denote the normal variable. One then naturally has a parametrization of the thin domain $\Omega^{h,\epsilon}$ given by $\BR(t,\Gth,z)=\Br(z,\Gth)+t\Bn(z,\Gth),$ 
where $(\Gth,z)\in E$ and $t\in [-g_1^h(\Gth,z),g_2^h(\Gth,z)],$ where $g_1^h(\Gth,z)$ and  $g_2^h(\Gth,z)$ are given as right before (\ref{1.1}) and fulfill (\ref{1.1}). 
Denote $A_{z}=\left|\frac{\partial \Br}{\partial z}\right|,$ $A_{\Gth}=\left|\frac{\partial \Br}{\partial\Gth}\right|$ 
the two nonzero (as $S$ is regular) components of the metric tensor on $S$ and denote $\Gk_{z}$ and $\Gk_{\Gth}$ the two principal curvatures.
Recall that in the orthonormal basis $(\Bn,\Be_\Gth,\Be_z)$ the gradient of any vector field $\Bu=(u_t,u_\Gth,u_z)\in H^1(\Omega^{h,\epsilon},\mathbb R^3)$ on the thin domain 
$\Omega^{h,\epsilon}$ is given by the formula [\ref{bib:Tov.Smi.}],
\begin{equation}
\label{2.2}
\nabla\Bu=
\begin{bmatrix}
  u_{t,t} & \dfrac{u_{t,\Gth}-A_{\Gth}\Gk_{\Gth}u_{\Gth}}{A_{\Gth}(1+t\Gk_{\Gth})} &
\dfrac{u_{t,z}-A_{z}\Gk_{z}u_{z}}{A_{z}(1+t\Gk_{z})}\\[3ex]
u_{\Gth,t}  &
\dfrac{A_{z}u_{\Gth,\Gth}+A_{z}A_{\Gth}\Gk_{\Gth}u_{t}+A_{\Gth,z}u_{z}}{A_{z}A_{\Gth}(1+t\Gk_{\Gth})} &
\dfrac{A_{\Gth}u_{\Gth,z}-A_{z,\Gth}u_{z}}{A_{z}A_{\Gth}(1+t\Gk_{z})}\\[3ex]
u_{ z,t}  & \dfrac{A_{z}u_{z,\Gth}-A_{\Gth,z}u_{\Gth}}{A_{z}A_{\Gth}(1+t\Gk_{\Gth})} &
\dfrac{A_{\Gth}u_{z,z}+A_{z}A_{\Gth}\Gk_{z}u_{t}+A_{z,\Gth}u_{\Gth}}{A_{z}A_{\Gth}(1+t\Gk_{z})}
\end{bmatrix}
\end{equation}
The norm $\|f\|_{L^2(\Omega^{h,\epsilon})}$ then comes from the inner product of two functions
$f,g\colon\Omega^{h,\epsilon}\to\mathbb R,$ given by $(f,g)_{\Omega^{h,\epsilon}}=\int_{\Omega^{h,\epsilon}}A_zA_\Gth f(t,\Gth,z)g(t,\Gth,z)d\Gth dzdt$
(recall that $A_\Gth$ and $A_z$ are strictly positive and thus apart from zero on $E$). Following [\ref{bib:Harutyunyan.2},\ref{bib:Harutyunyan.3},\ref{bib:Harutyunyan.4}], we introduce the thin domain $O(1)$ parameters that are the quantities $c_1,c_2,c_3,a,A,k,$ and $K,$ some of which defined below (note that $K$ is defined later in Theorem~\ref{th:2.2}) satisfy the conditions
\begin{align}
\label{2.3}
&a=\min_{E}(A_\Gth,A_z)>0, \quad A=\|A_\Gth\|_{W^{2,\infty}(E)}+\|A_z\|_{W^{2,\infty}(E)}<\infty,\\ \nonumber
&k=\|\Gk_\Gth\|_{W^{1,\infty}(E)}+\|\Gk_z\|_{W^{1,\infty}(E)}<\infty.
\end{align}
due to the fact that $C$ is regular, compact and of class $C^3.$ The constants $c_1,c_2$ are introduced in (\ref{1.1}) and $c_3$ is introduced in (\ref{2.1}). The constants $h_0>0$ and $C>0$ in the below theorems will depend only on the quantities $a,A,k,c_1,c_2,c_3$ and the constant $K$ defined in Theorem~\ref{th:2.2}. Finally, we introduce the vector space $V^{h,\epsilon}$ of displacements satisfying zero Dirichlet boundary condition on the thin part of the boundary of $\Omega^{h,\epsilon}.$ 
To that end denote that thin part 
$$\partial_S\Omega^{h,\epsilon}=\{(t,\Gth,z)\in\partial\Omega^{h,\epsilon} \ : \ \Gth(\Gth-1)(z-z_1(\Gth))(z-z_2(\Gth))=0\}.$$
Then the vector space is 
\begin{equation}
\label{2.4}
V^{h,\epsilon}=\{\Bu\in H^1(\Omega^{h,\epsilon},\mathbb R^3) \ : \  \Bu(t,\Gth,z)=0 \ \ \text{for}\ \ (t,\Gth,z)\in\partial_S\Omega^{h,\epsilon}\}. 
\end{equation}

We are ready to formulate the main results of the paper. 

\begin{theorem}[Korn's first inequality for shallow thin domains]
\label{th:2.1}
Let $S\subset\mathbb R^3$ be a connected, compact, regular $C^3$ surface with nonempty relative interior satisfying (\ref{2.1}) (with the parametrization $\Br=\Br(\Gth,z), (\Gth,z)\in E$), and let the thin domain $\Omega^{h,\epsilon}$ around $S$ be given as in the next two lines of (\ref{1.1}) with the barrier functions $g_1^h$ and $g_2^h$ satisfying (\ref{1.1}). If $\epsilon\in [h,\sqrt{h}],$ then there exists constants $h_0,C>0,$ depending only on the thin domain $O(1)$ parameters, such that Korn's first inequality holds (regardless of the Gaussian curvature sign):
\begin{equation}
 \label{2.5}
\|\nabla\Bu\|^2_{L^2(\Omega^{h,\epsilon})}\leq \frac{C\epsilon^2}{h^2}\|e(\Bu)\|^2_{L^2(\Omega^{h,\epsilon})},
\end{equation}
for all $h\in(0,h_0)$ and $\Bu\in V^{h,\epsilon}.$ Moreover, the constant in (\ref{2.5}) is optimal, i.e, there exists a sequence of displacements 
$\Bu^{h,\epsilon}\in V^{h,\epsilon}$ realizing the asymptotics of $h$ and $\epsilon$ in (\ref{2.5}) as $h,\epsilon\to 0.$ 
\end{theorem}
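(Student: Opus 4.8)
The plan is to prove the two halves of Theorem~\ref{th:2.1} separately: the upper bound (inequality~(\ref{2.5}) holds with constant $C\epsilon^2h^{-2}$) and the lower bound (a test sequence $\Bu^{h,\epsilon}$ showing the exponent cannot be improved). For the \textbf{upper bound}, I would work in the rescaled coordinates that flatten the thin direction, writing $t=hs$ with $s$ of order one, and $z = z_1(\Gth) + \epsilon w$ with $w\in[0,c_3]$, so that $\Omega^{h,\epsilon}$ becomes (up to a metric with controlled derivatives) a fixed-thickness slab in $s$ and a fixed-width strip in $w$, both of $O(1)$. The key observation driving the curvature-independence is that in this regime $\epsilon\le\sqrt h$, so $\epsilon^2\le h$; the curvature terms in~(\ref{2.2}) that couple $u_t$ into the tangential diagonal strains (the $A_zA_\Gth\Gk_\Gth u_t$ and $A_zA_\Gth\Gk_z u_t$ pieces) are of lower order than the bending/gradient terms once one tracks powers of $h$ and $\epsilon$, so the curvature cannot help rigidify the domain and the estimate reduces to a flat-strip Korn-type inequality. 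Concretely, I would: (i) use the Dirichlet condition on the $z=z_1(\Gth)$ and $z=z_2(\Gth)$ faces together with a Poincar\'e inequality in $z$ to control $\|\Bu\|_{L^2}$ by $\epsilon\|\partial_z\Bu\|_{L^2}$; (ii) use the Dirichlet condition on $\Gth=0,1$ and a one-dimensional Korn / Poincar\'e argument along $\Gth$-fibers to bound the $\Gth$-derivatives of the tangential components; (iii) estimate the off-diagonal strain components $e_{t\Gth}$, $e_{tz}$, $e_{\Gth z}$ to transfer control from one component to another, and finally (iv) absorb the curvature contributions using $\epsilon^2\le h$ and the smallness of $h$. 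The bookkeeping of which negative power of $h$ versus $\epsilon$ dominates each matrix entry of $\nabla\Bu - e(\Bu)$ is the technical heart, and this is exactly where the hypothesis $\epsilon\in[h,\sqrt h]$ is consumed.

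For the \textbf{lower bound / optimality}, I would construct an explicit ansatz mimicking an infinitesimal bending in the thin $z$-direction. The natural candidate is a displacement whose dominant component is normal, of the form $u_t \approx \phi(z)$ for a suitable profile $\phi$ supported away from $z_1(\Gth), z_2(\Gth)$ and smoothly cut off in $\Gth$, together with lower-order corrections $u_z$, $u_\Gth$ chosen to kill the leading-order strain components (in particular to make $e_{zz}$ and $e_{tz}$ as small as possible after using the $t$-dependence $u_z \approx -t\,\phi'(z)$, the classical Kirchhoff--Love correction). One then computes that $\|\nabla\Bu^{h,\epsilon}\|_{L^2}^2$ is of order set by $\|\phi'\|^2$ (times the slab volume $\sim h\epsilon$ if we don't normalize, or $\sim 1$ after normalization), while $\|e(\Bu^{h,\epsilon})\|_{L^2}^2$ picks up (a) a bending contribution $\sim t^2(\phi'')^2$ integrated over $t\in(-g_1^h,g_2^h)$, giving a factor $h^2$, and (b) the residual membrane strain from the curvature terms $\Gk u_t \sim \Gk\phi$, which after the cutoff and using $\epsilon\le\sqrt h$ turns out to be no larger than the bending part. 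Balancing $\phi$ at the scale $\epsilon$ (so $\phi' \sim \epsilon^{-1}$, $\phi'' \sim \epsilon^{-2}$) then yields the ratio $\|\nabla\Bu\|^2/\|e(\Bu)\|^2 \gtrsim \epsilon^2/h^2$, matching~(\ref{2.5}).

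I expect the \textbf{main obstacle} to be the upper bound, specifically making rigorous the claim that the curvature terms are genuinely negligible uniformly over all three curvature signs. In the companion papers [\ref{bib:Gra.Har.4},\ref{bib:Harutyunyan.2}] with $\epsilon=1$ the curvature is essential and the proofs are delicate and sign-dependent; here I must instead show it drops out, which means I cannot simply cite those arguments but must redo the component-by-component estimates carefully, tracking that every appearance of $\Gk u_t$ in the strain is controlled by $k\cdot\|u_t\|_{L^2} \le k\,\epsilon\,\|\partial_z u_t\|$, and that this is dominated by a term already appearing on the right-hand side of the desired inequality once multiplied by the correct power of $\epsilon^2/h^2$. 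A secondary difficulty is that the Dirichlet data is imposed only on the thin edge $\partial_S\Omega^{h,\epsilon}$, i.e.\ on the lateral faces $\Gth\in\{0,1\}$ and $z\in\{z_1(\Gth),z_2(\Gth)\}$ but \emph{not} on the large faces $t\in\{-g_1^h,g_2^h\}$; so the Poincar\'e-type inequalities must be applied fiberwise in the $\Gth$ and $z$ variables only, and one must be careful that the non-flat metric (with $\Gth$-dependent integration limits $z_i(\Gth)$ and $t$-dependent factors $1+t\Gk$) does not destroy these fiberwise estimates — this is handled by the $W^{1,\infty}$ and $W^{2,\infty}$ bounds collected in~(\ref{2.3}) and the smallness of $hk$.
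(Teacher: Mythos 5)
Your high-level intuitions are aligned with the paper: the argument for the upper bound does begin with a Poincar\'e inequality in the $z$-direction (width~$\epsilon$, with the Dirichlet boundary condition on $z=z_i(\Gth)$) that yields $\|\Bu\|\lesssim\epsilon\|\partial_z\Bu\|\lesssim\epsilon(\|\nabla\Bu\|+\|\Bu\|)$, and the condition $\epsilon\leq\sqrt{h}$ is indeed consumed to absorb this bound. The ansatz you sketch --- a Kirchhoff--Love bending with normal profile $\phi(z)$ oscillating at scale~$\epsilon$, so $\phi'\sim\epsilon^{-1}$, $\phi''\sim\epsilon^{-2}$ --- also matches the paper's construction (4.1)--(4.2) and the resulting balance $\|e(\Bu)\|^2/\|\nabla\Bu\|^2\sim(h/\epsilon^2)^2/\epsilon^{-2}=h^2/\epsilon^2$; the paper adds an oscillation at scale $\sqrt{h}$ in $\Gth$, but that is only needed so that one and the same ansatz serves both regimes (Theorem~2.2 with $K_G>0$ uses it too), and your version suffices for Theorem~2.1.

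There is, however, a genuine gap in the upper bound. Once you have $\|\Bu\|\lesssim\epsilon\|\nabla\Bu\|$, you still need a mechanism producing the factor $\epsilon/h$ between $\|\nabla\Bu\|$ and $\|e(\Bu)\|$; your steps (ii)--(iv) (``flat-strip Korn along $\Gth$-fibers'', ``transfer control via off-diagonal strains'', ``absorb curvature'') do not supply it. Rescaling $t=hs$, $z\approx\epsilon w$ does not reduce the problem to a fixed-domain Korn inequality, because the gradient picks up anisotropic factors $h^{-1}$ and $\epsilon^{-1}$ that mix the diagonal and off-diagonal entries of $e(\Bu)$ differently; the anisotropy is exactly the difficulty, not something the change of variables removes. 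What actually closes the argument in the paper is a \emph{Korn interpolation inequality} (estimates (3.2)/(3.5)):
\begin{equation*}
\|\nabla\Bu\|^2\;\lesssim\;\frac{\|u_t\|\,\|e(\Bu)\|}{h}\;+\;\|\Bu\|^2\;+\;\|e(\Bu)\|^2,
\end{equation*}
valid on the shell $S^{h,\epsilon}$ uniformly in $\epsilon\in[h,1]$. Feeding $\|u_t\|\leq\|\Bu\|\lesssim\epsilon\|\nabla\Bu\|$ into this, then applying Cauchy--Schwarz and absorbing $\epsilon^2\|\nabla\Bu\|^2\leq h\|\nabla\Bu\|^2$, gives $\|\nabla\Bu\|\lesssim(\epsilon/h)\|e(\Bu)\|$. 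This interpolation inequality is the nontrivial 3D ingredient; it is proven by a block decomposition of $\nabla\Bu$ (freezing $t$, $\Gth$, $z$ in turn) together with a sharp harmonic-function lemma on $2$D rectangles (Lemma~3.3, proved via harmonic projections) which is where the $1/h$ factor originates, and the Dirichlet data on the thin edge is consumed inside that lemma (through the condition $\BU(x,0)=\BU(x,\epsilon)=0$). Nothing resembling this step appears in your plan, and without it your upper bound does not close. A secondary, but real, issue is that the paper's interpolation inequality had previously been proved only for $\epsilon=O(1)$; the constants in the naive version degrade as $\epsilon\to 0$ (cf.\ (3.3), which carries an extra $\|\Bu\|^2/\epsilon^2$ term), so a new proof uniform in $\epsilon\in[h,1]$ (Lemma~3.2) is required --- another point your sketch does not anticipate.
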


The next theorem is concerned with the second regime.  

\begin{theorem}[Korn's first inequality]
\label{th:2.2}
Let $S\subset\mathbb R^3$ be a connected, compact, regular $C^3$ surface with nonempty relative interior satisfying (\ref{2.1}) 
(with the parametrization $\Br=\Br(\Gth,z), (\Gth,z)\in E$), and let the thin domain $\Omega^{h,\epsilon}$ around $S$ be given as in the next two
lines of (\ref{1.1}) with the barrier functions $g_1^h$ and $g_2^h$ satisfying (\ref{1.1}). Denote the Gaussian curvature of $S$ by $K_G=\Gk_\Gth \Gk_z.$ If $\epsilon\in [\sqrt{h},1],$ 
then there exists constants $h_0,C>0$ depending only on the thin domain $O(1)$ parameters, such that in each of the situations Korn's first inequality holds:
\begin{itemize}
\item[1(a).] If $K_G>0,$ then then there exists a constant $\epsilon_0>0,$ depending only on the parameters of $S,$ such that
\begin{equation}
 \label{2.6}
\|\nabla\Bu\|^2_{L^2(\Omega^{h,\epsilon})}\leq \frac{C}{h}\|e(\Bu)\|^2_{L^2(\Omega^{h,\epsilon})},
\end{equation}
for all $h\in (0,h_0),$ all $\epsilon\in (h,\epsilon_0),$ and all $\Bu\in V^{h,\epsilon}.$ Here $K=\min_{E}(|\Gk_\Gth|,|\Gk_z|).$
\item[1(b).] If $K_G<0,$
\begin{equation}
 \label{2.7}
\|\nabla\Bu\|^2_{L^2(\Omega^{h,\epsilon})}\leq \frac{C\epsilon^{2/3}}{h^{4/3}}\|e(\Bu)\|^2_{L^2(\Omega^{h,\epsilon})},
\end{equation}
for all $h\in (0,h_0),$ and all $\Bu\in V^{h,\epsilon}.$ Here $K=\min_{E}(|\Gk_\Gth|,|\Gk_z|).$
\item[1(c).] If $K_G=0,$ with $\Gk_z=0$ and $K=\min_{E}|\Gk_\Gth|>0,$ then
\begin{equation}
 \label{2.8}
\|\nabla\Bu\|^2_{L^2(\Omega^{h,\epsilon})}\leq \frac{C\epsilon}{h^{3/2}}\|e(\Bu)\|^2_{L^2(\Omega^{h,\epsilon})},
\end{equation}
for all $h\in (0,h_0),$ and all $\Bu\in V^{h,\epsilon}.$ 
\end{itemize}
Moreover the constants in (\ref{2.6})-(\ref{2.8}) are optimal, i.e, there exists sequences of displacements 
$\Bu^{h,\epsilon}\in V^{h,\epsilon}$ (in each situation) realizing the asymptotics of $h$ and $\epsilon$ in (\ref{2.6})-(\ref{2.8}) as $h,\epsilon\to 0.$ 

\end{theorem}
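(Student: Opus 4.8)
\section*{Proof proposal for Theorem~\ref{th:2.2}}

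The plan is to treat the upper bounds and the optimality (lower bound) constructions separately, and within the upper bounds to reduce everything, via the gradient formula (\ref{2.2}) and a slicing/rescaling argument, to a family of one-dimensional or two-dimensional Korn-type inequalities whose constants are known from the $\epsilon=1$ theory of [\ref{bib:Gra.Har.4},\ref{bib:Harutyunyan.2}]. First I would rescale the $z$-variable by $z = \epsilon\zeta$ so that the patch $E$ becomes a domain of size $O(1)\times O(1)$ in $(\Gth,\zeta)$, and track how the entries of $\nabla\Bu$ in (\ref{2.2}) transform: the $z$-derivatives pick up factors $\epsilon^{-1}$, the curvature $\Gk_z$ is unchanged, but the products $A_z\Gk_z$ that couple $u_t$ to the strain now sit against an $O(\epsilon)$ coefficient after the change of variables in the measure. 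This is the mechanism by which the curvature contribution is suppressed in the narrow direction and explains why, in regime 1(c) (a cylinder, $\Gk_z=0$), one expects the classical parabolic rate $h^{-3/2}$ to be boosted by $\epsilon$, while in 1(a)--1(b) the curvature terms survive with a definite sign.

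For the upper bounds I would proceed case by case following the scheme already used in [\ref{bib:Harutyunyan.2}]. In all cases the starting point is the ansatz-free decomposition: estimate the in-plane components of $\nabla\Bu-\BA$ (with $\BA=0$ forced by the Dirichlet condition on $\partial_S\Omega^{h,\epsilon}$) by interpolating between the thickness direction $t$, where a Poincar\'e inequality in $t\in(-g_1^h,g_2^h)$ costs a factor $h^{-2}$, and the tangential directions, where one uses the zero trace on the thin edge together with the strain components. The key algebraic identities are the ``curvature compatibility'' relations obtained by differentiating the off-diagonal entries of $e(\Bu)$: for $K_G\neq 0$ one can solve for $u_t$ in terms of strain components and first derivatives thereof, which is what produces the curvature-independent-looking but sign-sensitive bounds $h^{-1}$ (elliptic) and $\epsilon^{2/3}h^{-4/3}$ (hyperbolic). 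In case 1(a) the sign-definiteness of $\Gk_\Gth\Gk_z$ lets one absorb a quadratic form in $u_t$ with a favorable sign, giving the clean $h^{-1}$; the constraint $\epsilon<\epsilon_0$ enters precisely to guarantee that, after the $\zeta$-rescaling, the perturbation terms of relative size $\epsilon$ coming from the variable metric and curvature do not overwhelm this positive form. In case 1(b) there is no sign, so one instead optimizes the interpolation parameter (the familiar $h^{-4/3}$ ``three-term'' Young-inequality balance between the $t$-Poincar\'e cost and the tangential oscillation), and carrying the $\epsilon$ through that optimization yields the exponent $\epsilon^{2/3}$. Case 1(c) is the genuinely anisotropic parabolic estimate: one freezes $\Gth$, treats the problem as a family of shallow-shell Korn inequalities in $(t,z)$ with a single nonzero curvature $\Gk_\Gth$, and the $\epsilon$-scaling of the $z$-direction turns the $\epsilon=1$ rate $h^{-3/2}$ into $\epsilon h^{-3/2}$.

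For optimality I would, in each case, exhibit an explicit test displacement $\Bu^{h,\epsilon}\in V^{h,\epsilon}$ built from a product of a cutoff profile in $\Gth$ that vanishes at $\Gth=0,1$, a highly oscillatory or boundary-layer profile in $z$ (on the rescaled $\zeta$-scale) chosen to make the leading strain components cancel to the required order, and a polynomial-in-$t$ correction ensuring the remaining strain entries are small; this is the standard ``ansatz'' construction from [\ref{bib:Gra.Har.4},\ref{bib:Harutyunyan.2}], now with the $z$-frequency and the $t$-amplitude tuned as functions of both $h$ and $\epsilon$ so that $\|\nabla\Bu^{h,\epsilon}\|_{L^2}^2 / \|e(\Bu^{h,\epsilon})\|_{L^2}^2$ matches the claimed power of $h$ and $\epsilon$. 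The main obstacle, and the place where the most care is needed, is the upper bound in case 1(a): one must show that the favorable sign of the Gaussian curvature is not destroyed by the lower-order metric and curvature variation terms, uniformly in $\epsilon\in(h,\epsilon_0)$, and it is exactly the competition between the $O(1)$ positive quadratic form in $u_t$ and the $O(\epsilon)$ (or $O(h/\epsilon)$, $O(\sqrt h)$, depending on which term) error contributions that forces the smallness threshold $\epsilon_0$ and must be made quantitative; I expect this bookkeeping — rather than any single clever inequality — to be the technical heart of the proof.
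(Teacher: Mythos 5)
Your high-level outline (ansatz-free estimate plus explicit test fields) matches the paper's architecture, but the central mechanism that actually produces the $\epsilon$-dependence is missing, and without it the argument will not close.

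The paper does not get the estimates in cases 1(a) and 1(b) from a rescaling $z=\epsilon\zeta$. Your claim that after that rescaling the curvature coupling $A_z\kappa_z u_t$ ``sits against an $O(\epsilon)$ coefficient in the measure'' is incorrect: the Jacobian factor $\epsilon$ appears symmetrically on both $\|\nabla\Bu\|^2$ and $\|e(\Bu)\|^2$ and cancels, and the entry $\dfrac{u_{t,z}-A_z\kappa_z u_z}{A_z}$ becomes $\dfrac{\epsilon^{-1}u_{t,\zeta}-A_z\kappa_z u_z}{A_z}$, so the curvature term is not suppressed at all relative to the strain. What the paper actually does is introduce a \emph{Carleman-type weight} $e^{\lambda z}$, form the quadratic form $F(u_\theta,u_z)$ in (\ref{3.31})--(\ref{3.32}) whose $u_\theta^2$ and $u_z^2$ coefficients carry a $z$-derivative of the weight (hence a factor $\lambda$) while the cross term does not, and then choose $\lambda\sim\epsilon^{-1}$. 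It is precisely this choice, combined with the observation that $\epsilon\lambda$ stays bounded on $E$ by (\ref{2.1}), that turns the Carleman estimate (\ref{3.35}) into the in-plane bound $\|u_\theta\|+\|u_z\|\le C\epsilon\|\BB^{sym}\|$ in the hyperbolic case, and into the cross-estimates (\ref{3.42}) in the elliptic case. Your ``three-term Young-inequality balance'' would reproduce the $\epsilon=1$ exponent $h^{-4/3}$ but gives no route to the factor $\epsilon^{2/3}$; that factor comes from feeding $\|u_\theta\|+\|u_z\|\lesssim\epsilon\|\BB^{sym}\|$ into the interpolation inequality (\ref{3.5}) and optimizing.

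Two smaller but real issues. For case 1(c) you propose freezing $\theta$ and treating a family of 2D problems in $(t,z)$; but $\kappa_\theta$ is the only nonzero curvature and it lives in the $\theta$ direction, so freezing $\theta$ discards exactly the curvature that gives the parabolic rate. The paper instead uses the Gauss--Codazzi relations to get the explicit forms (\ref{3.52}), a Poincar\'e bound in $z$ for $u_z$ and $u_\theta$ (this is where the narrow width enters, giving $\epsilon^2$), and a chain of integration-by-parts identities for $B_{23}$ and $u_t$. For the optimality constructions your proposal puts the high frequency in $z$; the paper's Ans\"atze oscillate in $\theta$ (with $n=(\epsilon h)^{-1/3}$ in the hyperbolic case and $n=\epsilon^{-1/2}h^{-1/4}$ in the parabolic case), while the $\epsilon$-dependence in the $z$ direction comes only from the support condition (\ref{4.3}), $|W_{,z}|\sim\epsilon^{-1}$. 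Tuning a $z$-frequency would not produce the right competition between $\|\nabla\Bu\|$ and $\|e(\Bu)\|$.

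So the gap is concrete: you need the exponential-weight identity (\ref{3.31})--(\ref{3.32}) and the choice $\lambda\sim\epsilon^{-1}$ (or an equivalent device) to obtain the $\epsilon$-scaling in the in-plane components; none of the steps in your proposal generates it, and the rescaling heuristic you offer as a substitute is not valid.
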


\begin{remark}
\label{rem:2.3}
Both Theorems 2.1 and 2.2 clearly reveal the dependence of the optimal constant in Korn's first inequality on the thin domain width. In fact, in the absence of zero Dirichlet boundary conditions on the thin part of $\partial\Omega,$ the optimal constant in both (\ref{1.3}) and (\ref{1.2}) drop to $h^{-2}$ in the shallow domain regime $\epsilon\in[h,\sqrt{h}].$ This will be addresses later 
in Section~5.
\end{remark}

\section{Preliminary}
\setcounter{equation}{0}
\label{sec:3}

In the proof of Theorems~2.1 and 2.2 we adopt the overall strategies developed in [\ref{bib:Gra.Har.1},\ref{bib:Harutyunyan.1},\ref{bib:Gra.Har.4},\ref{bib:Harutyunyan.2}]. 
At its core is the reduction of $3D$ inequalities to $2D$ ones, which are then proven by the use of several key lemmas that provide inequalities with sharp constants 
for harmonic functions in thin domains. A $3D$ estimate that already plays a key role in the business of sharp Korn's inequality is the so-called Korn interpolation inequality for shells, introduced 
first for cylindrical shells in [\ref{bib:Gra.Har.1}]. Another simple but important component of the analysis is the fact that if one has a Korn's first inequality (\ref{1.3}) or a geometric 
rigidity estimate (\ref{1.2}) in the shell $S^{h,\epsilon}$ with thickness $h$ around $S$ defined by 
\begin{equation}
 \label{3.1}
 S^{h,\epsilon}=\{(t,\Gth,z)\in\mathbb R^3\ : \ (\Gth,z)\in E, \  t\in (-h,h)\},
\end{equation}
then in fact the same inequality with a comparable constant to $C_1$ (or to $C$) follows for the thin domain $\Omega^{h,\epsilon}.$ This passage is carried out via a localization argument 
by Kohn and Vogelius [\ref{bib:Koh.Vog.}]; the detailes will be provided in Section~3.4. We formulate below the Korn interpolation inequality. 
\begin{lemma}
\label{lem:3.1}
Let the surface $S\subset\mathbb R^3$ be as in Theorem~2.1, and let the shell $S^{h,\epsilon}$ and the vector space $V^{h,\epsilon}$ be defined as in (\ref{3.1}) and 
(\ref{2.4}) respectively. Then there exists constant $C,h_0,$ depending only on $a,A,k$ and $c_3,$ such that
\begin{equation}
 \label{3.2}
\|\nabla\Bu\|_{L^2(S^{h,\epsilon})}^2\leq C\left(\frac{\|u_t\|_{L^2(S^{h,\epsilon})}\|e(\Bu)\|_{L^2(S^{h,\epsilon})}}{h}+\|\Bu\|_{L^2(S^{h,\epsilon})}^2+\|e(\Bu)\|_{L^2(S^{h,\epsilon})}^2\right),
\end{equation} 
for all $\Bu\in V^{h,\epsilon},$ all $h\in (0,h_0),$ and all $\epsilon\in[h,1].$ 
\end{lemma}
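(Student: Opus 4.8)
The plan is to prove the Korn interpolation inequality \eqref{3.2} by decomposing $\Bu$ along the thickness variable $t$ into a ``first order Taylor part'' plus a remainder, and to control each piece separately. Concretely, for fixed $(\Gth,z)\in E$ write
\begin{equation*}
\Bu(t,\Gth,z)=\Bu^0(\Gth,z)+t\,\Bu^1(\Gth,z)+\Bu^{\perp}(t,\Gth,z),
\end{equation*}
where $\Bu^0$ and $\Bu^1$ are the $L^2((-h,h))$-projections of $\Bu$ onto $1$ and onto $t$ (so that $\Bu^1$ is essentially the $t$-average of $u_{\cdot,t}$), and $\Bu^{\perp}$ is $L^2$-orthogonal to both on each fiber. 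The point of this splitting is that a one-dimensional Poincar\'e inequality in $t$ gives $\|\Bu^{\perp}\|_{L^2}\lesssim h\|\partial_t\Bu\|_{L^2}\lesssim h\|e(\Bu)\|_{L^2}$ (the last step because $e(\Bu)_{tt}=u_{t,t}$ and, via the off-diagonal strain components together with the explicit gradient formula \eqref{2.2}, the full $\partial_t\Bu$ is controlled by $e(\Bu)$ and lower-order terms in $\Bu$), and similarly the $t$-derivative of the remainder is controlled. Thus $\Bu^{\perp}$ contributes only to the benign $\|e(\Bu)\|^2$ and $\|\Bu\|^2$ terms on the right of \eqref{3.2}.

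Next I would reduce the estimate to a genuinely two-dimensional statement about $\Bu^0$ and $\Bu^1$. Using \eqref{2.2}, the tangential derivatives $\partial_\Gth\Bu$, $\partial_z\Bu$ on the shell, integrated over $t\in(-h,h)$, split into contributions from $\Bu^0$, from $t\Bu^1$ (which carries an extra factor $h$ in $L^2$ over the fiber, hence a factor $h^2$ after squaring and integrating against the fiber measure), and from $\Bu^{\perp}$ (already handled). So $\|\nabla\Bu\|_{L^2(S^{h,\epsilon})}^2\lesssim h\|\nabla_{\Gth,z}\Bu^0\|_{L^2(E)}^2+h^3\|\nabla_{\Gth,z}\Bu^1\|_{L^2(E)}^2+(\text{bending/curvature lower order})+\|e(\Bu)\|^2+\|\Bu\|^2$. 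The analogous decomposition of $e(\Bu)$ identifies a two-dimensional ``strain'' $e_{2D}(\Bu^0)$ (membrane strain, with curvature corrections coupling in $\Bu^0_t$) appearing at order $h$, and the combination $e_{2D}(\Bu^1)+(\text{second fundamental form terms in }\Bu^0)$ (bending strain) appearing at order $h^3$. The crucial mechanism is that $\Bu^1$ is \emph{not} free: its tangential components are slaved to $\Bu^0$ through the off-diagonal strains $e(\Bu)_{t\Gth},e(\Bu)_{tz}$, which give $\Bu^1_\Gth\approx -(1/A_\Gth)\partial_\Gth u^0_t+\dots$ and likewise for $\Bu^1_z$ up to $O(\|e(\Bu)\|/h)$ errors. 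Substituting this back, the $h^3$-term becomes $h^3$ times the $L^2$-norm of second tangential derivatives of $u^0_t$ plus curvature terms — precisely the structure that, after an application of a two-dimensional Korn-type / interpolation inequality relating $\|\nabla w\|$ to $\|w\|$, $\|e_{2D}(w)\|$ and a negative-order norm of $w_t$, yields the mixed term $\|u_t\|\,\|e(\Bu)\|/h$ in \eqref{3.2}. One then invokes the $2D$ Korn and interpolation inequalities on the $h$-independent domain $E$ (available since $S$ is $C^3$ and $E$ is Lipschitz with $O(1)$ parameters), being careful that the zero boundary conditions from $V^{h,\epsilon}$ descend to $\Bu^0=\Bu^1=0$ on $\partial E$, so no rigid-motion ambiguity appears.

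The main obstacle I expect is bookkeeping the curvature coupling terms and the $\epsilon$-uniformity: the metric coefficients $A_\Gth,A_z$ and curvatures $\Gk_\Gth,\Gk_z$ enter \eqref{2.2} in a way that mixes $u_t$ with the tangential components, and one must verify that every error generated by commuting $\partial_t$ with division by $(1+t\Gk)$, and by replacing $\Bu^1$ with its strain-determined approximation, lands in either $\|e(\Bu)\|^2$ or $\|\Bu\|^2$ with a constant depending only on $a,A,k,c_3$ and, importantly, \emph{not} on $\epsilon$ or on $z_1,z_2$ beyond \eqref{2.1}. Since $h\le\epsilon$ here plays no role — the fiber is always $(-h,h)$ — the only place $\epsilon$ could intrude is through the $2D$ inequalities on $E$; I would handle this by extending in the $z$-direction (or by a covering of $E$ by $O(1)$-shaped pieces in the spirit of the Kohn–Vogelius localization mentioned in the text) so that the relevant $2D$ constants are controlled purely by the $O(1)$ parameters. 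A secondary technical point is justifying the fiberwise Taylor/projection decomposition for merely $H^1$ fields and the Fubini manipulations; this is routine but must be stated. Modulo these, the estimate follows by collecting the three displayed contributions and absorbing lower-order terms.
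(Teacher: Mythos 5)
Your proposal takes a genuinely different route from the paper. The paper never expands $\Bu$ in the thickness variable; it works directly on the simplified gradient matrix $\BB$, freezes each of the three variables $t,\Gth,z$ in turn, handles the $t=\mathrm{const}$ block by a Fubini--integration-by-parts identity, and handles the $\Gth=\mathrm{const}$ and $z=\mathrm{const}$ blocks by a two-dimensional interpolation lemma (Lemma~3.2) proved via \emph{harmonic replacement}: one replaces $u_t$ by its harmonic extension on the thin rectangle $(0,h)\times(0,\epsilon)$ and invokes a sharp estimate for harmonic functions with Dirichlet data (Lemma~3.3), which is the sole source of the $1/h$ factor in (\ref{3.2}). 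Your scheme instead is the Kohn--Vogelius/Kirchhoff--Love style Taylor expansion $\Bu=\Bu^0+t\Bu^1+\Bu^\perp$, reduction to 2D membrane/bending strains on $E$, and a 2D Korn-type inequality. These are not the same argument, and the two routes buy different things: the paper's method is tailored to produce the interpolation term with $\epsilon$-uniform constants by design; yours, if completed, would give a more structural dimension-reduction picture, but as written it contains real gaps.

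Two concrete problems. First, the claim $\|\Bu^\perp\|\lesssim h\|\partial_t\Bu\|\lesssim h\|e(\Bu)\|$ is false as stated: from the explicit gradient (\ref{2.2}), $u_{\Gth,t}=2e(\Bu)_{t\Gth}-(\nabla\Bu)_{12}$ and $u_{z,t}=2e(\Bu)_{tz}-(\nabla\Bu)_{13}$, so $\partial_t\Bu$ is controlled by $e(\Bu)$ \emph{plus the very tangential derivatives of $u_t$ that the lemma is trying to bound}, not merely by $e(\Bu)$ and $\Bu$. At best one gets $\|\Bu^\perp\|\lesssim h(\|e(\Bu)\|+\|\nabla\Bu\|)$, and the extra piece must be absorbed into the left-hand side; this is fixable but needs to be said. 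More seriously, your assertion that $\Bu^\perp$ ``contributes only to the benign $\|e(\Bu)\|^2$ and $\|\Bu\|^2$ terms'' is unsupported: the gradient $\nabla\Bu$ contains the tangential derivatives $\nabla_{\Gth,z}\Bu^\perp$, and your fiberwise Poincar\'e bound controls only $\|\Bu^\perp\|$, not $\|\nabla_{\Gth,z}\Bu^\perp\|$. Since one only has $\Bu\in H^1$, one cannot estimate $\nabla_{\Gth,z}\Bu^\perp$ by commuting $\partial_\Gth$ through the fiber projection (that would produce second derivatives of $\Bu$). Some additional mechanism is needed to show the high-frequency-in-$t$ part of the gradient is dominated by $e(\Bu)$; without it, the decomposition is not closed.

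Second, the step that actually produces the $1/h$ term --- ``an application of a two-dimensional Korn-type / interpolation inequality relating $\|\nabla w\|$ to $\|w\|$, $\|e_{2D}(w)\|$ and a negative-order norm of $w_t$'' --- is not a statement of any known off-the-shelf inequality, and it is precisely where all the difficulty of Lemma~\ref{lem:3.1} is concentrated. In the paper that role is played by Lemma~\ref{lem:3.2}, whose proof (harmonic projection plus Lemma~\ref{lem:3.3}) is nontrivial and is what makes the constants uniform in $\epsilon\in[h,1]$. Unless you state and prove the 2D inequality you invoke, with constants depending only on the $O(1)$ parameters of $S$ and not on $\epsilon$ (your covering suggestion is reasonable but must be carried out, since the Dirichlet condition is on all of $\partial E$ and $E$ has aspect ratio $1:\epsilon$), the proposal does not constitute a proof of (\ref{3.2}).
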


It is worth mentioning that slightly stronger versions inequality (\ref{3.2}) have been proven in [\ref{bib:Gra.Har.4}, Theorem~3.1] and in [\ref{bib:Harutyunyan.2}, Theorem~3.2] for the case 
$\epsilon=O(1).$ Also, the same version has been proven in [\ref{bib:Harutyunyan.3}, Theorem~3.1] (see also [\ref{bib:Harutyunyan.4}]) without assuming any boundary condition on $\Bu.$ 
Note that after dividing $S^{h,\epsilon}$ into roughly $1/\epsilon$ parts in the $\Gth$ direction, variable rescale $x'=\epsilon x$ and application 
of (\ref{3.2}) (with no boundary conditions on $\Bu$) one would deduce the estimate 
\begin{equation}
 \label{3.3}
 \|\nabla\Bu\|_{L^2(S^{h,\epsilon})}^2\leq C\left(\frac{\|u_t\|_{L^2(S^{h,\epsilon})}\|e(\Bu)\|_{L^2(S^{h,\epsilon})}}{h}+\frac{\|\Bu\|_{L^2(S^{h,\epsilon})}^2}{\epsilon^2}
 +\|e(\Bu)\|_{L^2(S^{h,\epsilon})}^2\right),
\end{equation}
 for all $\Bu\in H^1(S^{h,\epsilon},\mathbb R^3).$ It will be seen later that (\ref{3.3}) is not good enough for the purpose of Theorems 2.1 and 2.2 for the full range $\epsilon\in[h,1]$ and all cases
 (\ref{2.5})-(\ref{2.8}). This being said we will need the validity of (\ref{3.2}) for the full range $\epsilon\in[h,1].$ As already mentioned, the structure of the current paper will be quite 
 similar to the ones in [\ref{bib:Gra.Har.4},\ref{bib:Harutyunyan.2}], thus we will skip some straightforward steps in the proof referring to that previous works, and at the same time 
 trying to keep the current paper as self-contained as possible without major repetitions. Note that both Theorems 2.1 and 2.2 claim Ansatz-free lower bounds and their sharpness, where the sharpness part will be proven by providing an Ansatz that makes the inequality an asymptotic equality as $h,\epsilon\to 0.$ We start with the Ansatz-free lower bounds past.

\subsection{Ansatz-free lower bounds: Interpolation inequalities}
\label{sub:3.1}

In the sequel the $\|\cdot \|$ norm will be the $L^2$ norm $\|\cdot \|_{L^2(S^{h,\epsilon})},$ and the constants $C,h_0$ will depend only on $a,A,k,$ and $c_3$ 
throughout Section~3 unless specified otherwise. The proof is performed by sequential freeze of the acting variables $t,\Gth,z$ to reduce the $3D$ inequality of the corresponding block 
of the gradient $\Bu$ to a $2D$ inequality. Before doing that, we make the following (again adopted in [\ref{bib:Gra.Har.4},\ref{bib:Harutyunyan.2}]) 
simplification to $\nabla\Bu$ by removing the $t\Gk_\Gth$ and $t\Gk_z$ parts in the denominators of the second and third columns respectively. 
 Namely, setting 
 \begin{equation}
\label{3.4}
\BB=
\begin{bmatrix}
  u_{t,t} & \dfrac{u_{t,\Gth}-A_{\Gth}\Gk_{\Gth}u_{\Gth}}{A_{\Gth}} & \dfrac{u_{t,z}-A_{z}\Gk_{z}u_{z}}{A_{z}}\\[3ex]
u_{\Gth,t}  & \dfrac{A_{z}u_{\Gth,\Gth}+A_{z}A_{\Gth}\Gk_{\Gth}u_{t}+A_{\Gth,z}u_{z}}{A_{z}A_{\Gth}} & \dfrac{A_{\Gth}u_{\Gth,z}-A_{z,\Gth}u_{z}}{A_{z}A_{\Gth}}\\[3ex]
u_{ z,t}  & \dfrac{A_{z}u_{z,\Gth}-A_{\Gth,z}u_{\Gth}}{A_{z}A_{\Gth}} & \dfrac{A_{\Gth}u_{z,z}+A_{z}A_{\Gth}\Gk_{z}u_{t}+A_{z,\Gth}u_{\Gth}}{A_{z}A_{\Gth}}
\end{bmatrix}
\end{equation} 
 we first aim at the simplified version of (\ref{3.2}), which reads as 
 \begin{equation}
 \label{3.5}
\|\BB\|^2\leq C\left(\frac{\|u_t\| \|\BB^{sym}\|}{h}+\|\Bu\|^2+\|\BB^{sym}\|^2\right),
\end{equation} 
where $\BB^{sym}=\frac{1}{2}(\BB+\BB^T)$ is the symmetric part of the matrix $\BB.$ The later passage from (\ref{3.5}) to (\ref{3.2}) will then be quite straightforward taking into account the 
smallness of $t$ and thus the omitted terms. 

\begin{proof}[Proof of (\ref{3.5})] 
Consider the blocks $t=const,$ $\Gth=const,$ and $z=const$ separately. \\
 \textbf{The block $t=const.$} \textit{There exists a constant $C>0$ such that 
 \begin{equation}
 \label{3.6}
\|B_{23}\|^2+\|B_{32}\|^2\leq C(\|\Bu\|^2+\|\BB^{sym}\|^2),
\end{equation}   
for all $h\in (0,1)$ and $\Bu\in V^{h,\epsilon}$.} 
\begin{proof}
The analysis for this block is the simplest. For functions $f,g\in L^2(E,\mathbb R)$ define the inner product 
 $(f,g)_E=\int_E A_\Gth A_zfgd\Gth dz.$ Hence we have on one hand by Fubini's theorem and integrating by parts, 
 \begin{align*}
 \left(B_{23}+\frac{A_{z,\Gth }u_z}{A_\Gth A_z}, B_{32}+\frac{A_{\Gth,z} u_\Gth}{A_\Gth A_z}\right)_E &=\int_E u_{\Gth,z}u_{z,\Gth} \\
 &=\int_E u_{\Gth,\Gth}u_{z,z}.
 \end{align*}
 On the other hand we have 
 \begin{align*}
\int_E u_{\Gth,\Gth}u_{z,z}&=\left( B_{22}-\Gk_\Gth u_t-\frac{A_{\Gth,z }u_z}{A_\Gth A_z}, B_{33}-\Gk_z u_t-\frac{A_{z,\Gth} u_\Gth}{A_\Gth A_z}\right)_E, 
  \end{align*}
 thus recalling the equalities $B_{22}=B_{22}^{sym},$ $B_{33}=B_{33}^{sym},$ and $B_{32}=2B_{23}^{sym}-B_{23},$ we obtain from the last two equalities the bound 
$$ |(B_{23},B_{23})_E|\leq C(|(\BB^{sym},\BB^{sym})_E|+|(\Bu,\Bu)_E|),$$
which then integrating in $t\in (-h,h)$ we discover 
 \begin{equation}
 \label{3.7}
\|B_{23}\|^2\leq C(\|\Bu\|^2+\|\BB^{sym}\|^2).
\end{equation}   
Consequently taking into account one more time the equality $B_{32}=2B_{23}^{sym}-B_{23},$ we obtain the estimate (\ref{3.6}) from (\ref{3.7}) by the triangle inequality. 

\end{proof}

\noindent\textbf{The block $\Gth=const.$} \textit{There exists a constant $C>0,$ such that one has the estimate 
\begin{equation}
 \label{3.8}
\|B_{13}\|^2+\|B_{31}\|^2\leq C\left(\frac{\|u_t\|\|\BB^{sym}\|}{h}+\frac{1}{\delta}\|\Bu\|^2+\delta\|B_{12}\|^2+\|\BB^{sym}\|^2\right).
\end{equation}   
for all $h\in (0,1),$ all $\Bu\in V^{h,\epsilon},$ and all $\delta\in (0,1)$.}
 
\begin{proof}[Proof of (\ref{3.8})] The proof for this block is based on several lemmas proven in [\ref{bib:Gra.Har.1},\ref{bib:Harutyunyan.1},\ref{bib:Harutyunyan.3}], and their modifications that we will formulate and prove when necessary. A main lemma is Lemma~5.1 from [\ref{bib:Harutyunyan.3}], which is not directly applicable to the situation $\epsilon<<1$ that falls 
into the full range $[h,1].$ This is due to the fact that the constants in the targeted inequality may depend on $\epsilon,$ which may mishandle the final constants we get. 
Therefore, we aim to prove the following modification applicable to all values of $\epsilon\in [h,1].$ 
\begin{lemma}
\label{lem:3.2}
For parameters $0<h\leq \epsilon\leq 1$ denote the two dimensional rectangle $R=(0,h)\times (0,\epsilon).$ Given a displacement field
$\BU=(u(x,y),v(x,y))\in H^1(R,\mathbb R^2),$ vector fields $\BGa,\BGb\in W^{1,\infty}(R,\mathbb R^2),$ and a function $w\in H^1(R,\mathbb R),$ denote
\begin{equation}
\label{3.9}
\BM=
\begin{bmatrix}
\partial_xu & \partial_yu+\BGa\cdot\BU\\
\partial_xv & \partial_yv+\BGb\cdot\BU+w
\end{bmatrix}.
\end{equation}
Then for any displacement field $\BU\in H^1(R,\mathbb R^2)$ satisfying the Dirichlet boundary conditions 
$$\BU(x,0)=\BU(x,\epsilon)=0,\quad \text{for all}\quad x\in(0,h)$$
in the trace sense, the following interpolation inequality holds:
\begin{align}
\label{3.10}
\|\BM\|_{L^2(R)}^2 &\leq \frac{\tilde C}{h} \|u\|_{L^2(R)}\cdot \|\BM^{sym}\|_{L^2(R)}+\|\BM^{sym}\|_{L^2(R)}^2\\ \nonumber
&+\left(\frac{1}{\delta}+h^2\right)\|\BU\|_{L^2(R)}^2+(\delta+h^2)(\|w\|_{L^2(R)}^2+\|\partial_x w\|_{L^2(R)}^2),
\end{align}
for all $\delta \in (0,1),$ $h\in (0,\tilde h).$ Here the constants $\tilde C,\tilde h>0$ (the existence of which is claimed) depend only on the quantities $\|\BGa\|_{W^{1,\infty}(R)},$ $\|\BGb\|_{W^{1,\infty}(R)}.$ 
\end{lemma}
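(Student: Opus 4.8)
The plan is to reproduce, step by step, the argument behind the planar interpolation inequality [\ref{bib:Harutyunyan.3}, Lemma~5.1] (see also the related two–dimensional lemmas of [\ref{bib:Gra.Har.1}] and [\ref{bib:Harutyunyan.1}]), but to perform every estimate on the rectangle $R=(0,h)\times(0,\epsilon)$ itself rather than after rescaling, and to record the exact dependence of each intermediate constant on $h$ and $\epsilon$. As a first reduction I would write $\BM=\nabla\BU+\BN$, where $\BN$ has vanishing first column and second column $(\BGa\cdot\BU,\ \BGb\cdot\BU+w)^{T}$, so that $\BM^{sym}=e(\BU)+\BN^{sym}$ and $\|\BN\|_{L^2(R)}+\|\BN^{sym}\|_{L^2(R)}\le C\,(\|\BU\|_{L^2(R)}+\|w\|_{L^2(R)})$ with $C$ depending only on $\|\BGa\|_{\infty},\|\BGb\|_{\infty}$. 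Here $e=e(\BU)$ is the usual strain. The entries $M_{11}=(\BM^{sym})_{11}$ and $M_{22}=(\BM^{sym})_{22}$ need no estimate, and $M_{12}+M_{21}=2(\BM^{sym})_{12}$; hence, since $\|M_{12}\|^2+\|M_{21}\|^2\le C(\|\BM^{sym}\|^2+\|q\|^2)$ with $q:=M_{21}=\partial_x v$, the whole statement reduces to bounding the single scalar $q$ — equivalently $\partial_y u$, or the infinitesimal rotation $\partial_x v-\partial_y u$ of $\BU$ — by the right–hand side of (\ref{3.10}).

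\emph{The shear/rotation estimate} is the heart of the matter and follows the scheme of [\ref{bib:Harutyunyan.3}, Lemma~5.1]. The Dirichlet data $\BU(x,0)=\BU(x,\epsilon)=0$ enters in two ways: in the strong form $\partial_x u=\partial_x v=0$ on $y=0,\epsilon$ (hence $e_{xx}=0$ there), which annihilates every boundary contribution produced by integration by parts in $y$; and through the one–dimensional Poincar\'e inequality in $y$, whose constant is $\le\epsilon/\pi\le 1$ and therefore harmless. Integrations by parts in the long variable $y$ are combined with the strain–compatibility identities ($\partial_{xy}v=\partial_x e_{yy}$, $\partial_{xx}v=2\partial_x e_{xy}-\partial_y e_{xx}$, and their analogues) to trade every second derivative of $\BU$ for a first derivative of a component of $\BM^{sym}$, modulo the perturbation terms coming from $\BN$. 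Integrations by parts in the thin variable $x$ then leave boundary traces on the faces $x=0,h$; these are controlled by the thin–variable trace inequality $\|f(h,\cdot)\|_{L^2(0,\epsilon)}^2\le\frac{C}{h}\|f\|_{L^2(R)}^2+Ch\,\|\partial_x f\|_{L^2(R)}^2$ (and its $x=0$ counterpart), together with the sharp trace and interpolation estimates for harmonic functions in a thin rectangle established in [\ref{bib:Gra.Har.1}], [\ref{bib:Harutyunyan.1}], [\ref{bib:Harutyunyan.3}], applied to a harmonic part of $\BU$ after splitting off a component whose Laplacian is controlled by $e(\BU)$. It is this combination that produces the coefficient $1/h$ in front of $\|u\|\,\|\BM^{sym}\|$; the companion $Ch\|\partial_x f\|^2$ pieces and the Poincar\'e remainders contribute only the admissible $h^2$–terms.

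\emph{Routing the perturbations and assembling.} The lower–order terms $\BGa\cdot\BU$, $\BGb\cdot\BU$ and $w$ are carried through the argument rather than discarded. Every derivative that lands on $\BGa$ or $\BGb$ costs $\|\BGa\|_{W^{1,\infty}(R)}$ or $\|\BGb\|_{W^{1,\infty}(R)}$ times $\|\BU\|$ and is absorbed, via Young's inequality, into the $(\tfrac1\delta+h^2)\|\BU\|^2$ term; every occurrence of $\BGa$ or $\BGb$ multiplying a first derivative of $\BU$ produces a multiple of $\|\BM\|$, which is absorbed into the left–hand side once $h<\tilde h$ (this is how $\tilde h$ is chosen); and the single place where a derivative reaches $w$ — through $e_{yy}=M_{22}-\BGb\cdot\BU-w$, so that $\partial_x e_{yy}$ carries $\partial_x w$ — is split off by Young's inequality into $(\delta+h^2)\|\partial_x w\|^2$. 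Collecting $\|M_{11}\|^2+\|M_{22}\|^2\le 2\|\BM^{sym}\|^2$, the bound $\|M_{12}\|^2+\|M_{21}\|^2\le C(\|\BM^{sym}\|^2+\|q\|^2)$, and the bound on $\|q\|$ from the previous step, gives (\ref{3.10}) with $\tilde C,\tilde h$ depending only on $\|\BGa\|_{W^{1,\infty}(R)}$ and $\|\BGb\|_{W^{1,\infty}(R)}$.

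The step I expect to be the real obstacle is keeping the shear/rotation estimate genuinely $\epsilon$–uniform: in the thin strip $R$ with $h\le\epsilon\le 1$, the estimate in the long ($y$) direction must be carried out without ever dividing by $\epsilon$ — in particular the harmonic–function and Poincar\'e steps must be invoked in a scale–free form — so that the outcome is the clean inequality (\ref{3.10}) rather than the weaker estimate (\ref{3.3}) with its $\epsilon^{-2}$ loss. In the regime $\epsilon=O(1)$ treated in [\ref{bib:Harutyunyan.3}] this issue is invisible, and checking that no hidden $\epsilon^{-1}$ creeps into any auxiliary constant is essentially the only substantive difference from the proof of [\ref{bib:Harutyunyan.3}, Lemma~5.1].
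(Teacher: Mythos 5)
Your proposal takes essentially the same route as the paper: project $u$ onto its harmonic part $u_1$ (sharing $u$'s boundary values), control the correction $u-u_1$ via the divergence-form representation of $\Delta u$ in terms of $\BM^{sym}_{\varphi,\psi}$, apply the thin-rectangle harmonic-function estimate of [\ref{bib:Gra.Har.1}, Lemma~4.3] (the paper's Lemma~\ref{lem:3.3}) to $u_1$ to produce the $1/h$ factor, and use Young's inequality to route the $\BGa,\BGb,w$ perturbations into the $\delta$-terms; you also correctly flag $\epsilon$-uniformity as the delicate point. One clarification worth recording: the Poincar\'e inequality that makes the argument $\epsilon$-free is applied to $u-u_1$ in the \emph{thin} $x$-direction with constant $h$ (since $u-u_1$ vanishes on all of $\partial R$), not in the $y$-direction; the $y$-boundary data enters only through the definition of $u_1$ and the hypotheses of Lemma~\ref{lem:3.3}.
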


\begin{proof}[Proof of Lemma~3.2] We will adopt the method of harmonic projections, following the proof of the similar lemma in [\ref{bib:Harutyunyan.3}] with necessary modifications. In the proof of the lemma the constant $\tilde C$ may depend only on the norms $\|\BGa\|_{W^{1,\infty}}$ $\|\BGb\|_{W^{1,\infty}}.$ Also, for simplicity we will write $\|\cdot\|$ instead of $\|\cdot\|_{L^2(R)}.$ 
By density we can assume without loss of generality that $\BU\in C^2(\bar R).$ For functions $\varphi,\phi\in H^1(R,\mathbb R)$ introduce the perturbed gradient
\begin{equation}
\label{3.11}
\BM_{\varphi,\psi}=
\nabla \BU+
\begin{bmatrix}
0 & \varphi\\
0 & \psi
\end{bmatrix}
=
\begin{bmatrix}
\partial_x u & \partial_y u+\varphi\\
\partial_x v & \partial_y v+\psi
\end{bmatrix}.
\end{equation}
Let the function $u_1$ be the unique solution to the Dirichlet boundary value problem
\begin{equation}
\label{3.12}
\begin{cases}
\Delta u_1=0 & in \ \ R\\
u_1=u & on \ \ \partial R.
\end{cases}
\end{equation}
First of all Poincar\'e inequality in the horizontal direction implies the bound
\begin{equation}
\label{3.13}
\|u-u_1\|\leq h\|\partial_x(u-u_1)\|\leq h\|\nabla(u-u_1)\|.
\end{equation}
Next, from the harmonicity of $u_1$ we have the obvious identity with a divergence form right-hand side:
$$\Delta (u-u_1)=\Delta u=\frac{\partial((\BM_{\varphi,\psi}^{sym})_{11}-(\BM_{\varphi,\psi}^{sym})_{22})}{\partial x}+\frac{\partial (2\BM_{\varphi,\psi}^{sym})_{12}}{\partial y}
+\partial_y \varphi-\partial_x\psi,$$
thus standard elliptic estimates together with (\ref{3.13}) yield the bound
\begin{equation}
\label{3.14}
\|\nabla(u-u_1)\|\leq \tilde C\left[\|\BM_{\varphi,\psi}^{sym}\|+h(\|\partial_y\varphi\|+\|\partial_x \psi\|)\right].
\end{equation}
Estimates (\ref{3.14}) and (\ref{3.13}) will allow us to replace $u$ by $u_1$ in the targeted estimates, thus proving them under the additional condition that $u$ was harmonic in $R.$
Next we recall lemma~4.3 from [\ref{bib:Gra.Har.1}]. 
\begin{lemma}
\label{lem:3.3}
Suppose $h,p>0$ and $w\in H^1(T,\mathbb R)$ is harmonic and satisfies the boundary condition $w(x,0)=w(x,p)=0$ in the sense of traces, where 
$T=(0,h)\times(0,p).$ Then 
\begin{equation}
\label{3.15}
\|\partial_y w\|_{L^2(T)}^2\leq \frac{2\sqrt 3}{h}\|\partial_x w\|_{L^2(T)}\|w\|_{L^2(T)}+\|\partial_x w\|_{L^2(T)}^2.
\end{equation}
\end{lemma}

We now make use of the fact that $u_1$ is in fact harmonic. To that end we first replace $u$ by $u_1$ through the bounds (\ref{3.13}) and (\ref{3.14}). 
Namely we have by the triangle inequality and by (\ref{3.14}),
\begin{align}
\label{3.16}
\|\partial_y u+\varphi\|^2&\leq 4(\|\partial_y u_1\|^2+\|\partial_y(u-u_1)\|^2+\|\varphi\|^2)\\ \nonumber
&\leq \tilde C(\|\partial_y u_1\|^2+\|\nabla(u-u_1)\|^2+\|\varphi\|^2)\\ \nonumber
&\leq \tilde C(\|\partial_y u_1\|^2+\|\varphi\|^2+\|\BM_{\varphi,\phi}^{sym}\|+h(\|\varphi_y\|+\|\phi_x\|)).
\end{align}
On the other hand we have for the summand $\|\partial_y u_1\|^2$ by Lemma~\ref{lem:3.3},
\begin{align}
\label{3.17}
\|\partial_y u_1\|^2&\leq \frac{2\sqrt 3}{h}\|\partial_x u_1\|\|u_1\|+\|\partial_x u_1\|^2\\ \nonumber
&\leq \frac{2\sqrt 3}{h}(\|\partial_x(u_1-u)\|+\|\partial_x u\|)(\|u\|+\|u_1-u\|)+(\|\partial_x(u_1-u)\|+\|\partial_x u\|)^2\\ \nonumber
&\leq \frac{2\sqrt 3}{h}(\|\nabla(u_1-u)\|+\|\partial_x u\|)(\|u\|+\|u_1-u\|)+(\|\nabla(u_1-u)\|+\|\partial_x u\|)^2.
\end{align}
Recall that that $\partial_x u$ is the $11$ entry of $\BM_{\varphi,\psi}^{sym},$ thus putting together (\ref{3.16}) and (\ref{3.17}), and taking into account the bounds 
(\ref{3.13}) and (\ref{3.14}), we arrive at the estimate 
\begin{align}
\label{3.18}
\|\partial_y u+\varphi\|^2&\leq \tilde C\left(\frac{1}{h}\|u\|\cdot\|\BM_{\varphi,\psi}^{sym}\|+\|u\|^2+\|\BM_{\varphi,\psi}^{sym}\|^2+\|\varphi\|^2\right)\\ \nonumber
&+\tilde C(\|u\|(\|\partial_y \varphi\|+\|\partial_x\psi\|)+h^2(\|\partial_y\varphi\|^2+\|\partial_x\psi\|^2)).
\end{align}
Consequently, for the case $f=\BGa\cdot\BU$ and $g=\BGb\cdot\BU+w$ we have the obvious bounds
\begin{align}
\label{3.19}
\|\partial_y\varphi\|&\leq \tilde C\|\BU\|_{H^1(R)}\leq \tilde C(\|\BM_{\varphi,\psi}^{sym}\|+\|\BU\|+\|w\|),\\ \nonumber
\|\partial_x\psi \|&\leq \tilde C\|\BU\|_{H^1(R)}+\|\partial_xw\|\leq \tilde C(\|\BM_{\varphi,\psi}^{sym}\|+\|\BU\|+\|\partial_xw\|).
\end{align}
Finally, applying the bounds (\ref{3.19}) to the right-hand side of (\ref{3.18}) and estimating the product term $\|u\|(\|\partial_y\varphi\|+\|\partial_x\psi\|)$ 
by $\frac{1}{\delta}\|u\|^2+\delta(\|\partial_y\varphi\|+\|\partial_x\psi\|)^2,$ we obtain the estimate (\ref{3.10}) for $\partial_y u+\varphi$ in place of $\BM$
on the left. The additional $21$ entry $\partial_xv$ of $\BM$ is then estimated in terms of $\partial_y u+\varphi$ and $\BM^{sym}$ via triangle inequality. 
This completes the proof of the lemma.

\end{proof}
Now the bound (\ref{3.8}) is obtained by a clever choice of the fields $\BU,$ $\alpha,$ $\beta$ and the function $w$ in Lemma~\ref{lem:3.2}, which was originally 
done in [\ref{bib:Harutyunyan.3}]. We recall the formulae here leaving the details to the reader. A working choice turns out to be 
$\BU=(u_t,A_zu_z),$ $\BGa=(0,-\Gk_z),$ $\BGb=(A_z^2\Gk_z,-\frac{A_{z,z}}{A_z})$ and the function $w=\frac{A_zA_{z,\Gth}}{A_\Gth}u_\Gth$ in the variables $t$ and $z.$ 
Indeed, we have by straightforward calculation
\begin{align}
\label{3.20}
M_{11}&=u_{t,t},\quad M_{12}=u_{t,z}-A_z\Gk_zu_z\\ \nonumber
M_{21}&=A_zu_{z,t},\quad M_{22}=A_zu_{z,z}+A_z^2\Gk_zu_t+\frac{A_zA_{z,\Gth}}{A_\Gth}u_\Gth,\\ \nonumber
\partial_t w&=\frac{A_zA_{z,\Gth}}{A_\Gth}u_{\Gth,t}\\ \nonumber
\end{align}
thus taking into account the form of the matrix $\BB$ in (\ref{3.4}) we have
\begin{align}
\label{3.21}
M^{sym}_{11}&=B^{sym}_{11},\quad M^{sym}_{12}=A_zB^{sym}_{12},\quad M^{sym}_{22}=A_z^2B^{sym}_{33},\\ \nonumber
|w|&\leq \tilde C|\Bu|,\quad |\partial_t w|\leq \tilde C(|B_{12}|+|\Bu|).
\end{align}
Now (\ref{3.8}) follows from plugging in the forms (\ref{3.20}) and (\ref{3.21}) in (\ref{3.10}) and integrating the obtained estimate in $\Gth\in [0,1]$. 

\end{proof}

For the block $z=const$ we get the analogous inequality (by symmetry) through the same analysis.\\ 

\noindent \textbf{The block $z=const.$} \textit{There exists a constant $C>0,$ such that one has the estimate 
\begin{equation}
 \label{3.22}
\|B_{12}\|^2+\|B_{21}\|^2\leq C\left(\frac{\|u_t\|\|\BB^{sym}\|}{h}+\frac{1}{\delta}\|\Bu\|^2+\delta\|B_{13}\|^2+\|\BB^{sym}\|^2\right).
\end{equation}   
 for all $h\in (0,1),$ all $\Bu\in V^{h,\epsilon},$ and all $\delta\in (0,1)$.}
Finally putting together the estimates (\ref{3.6}), (\ref{3.8}) and (\ref{3.22}) we obtain the bound 
\begin{equation}
 \label{3.23}
\|\BB\|^2\leq C\left(\frac{\|u_t\|\|\BB^{sym}\|}{h}+\frac{1}{\delta}\|\Bu\|^2+\delta\|\BB\|^2+\|\BB^{sym}\|^2\right).
\end{equation}   
Thus choosing the parameter $\delta>0$ small enough (i.e., $C\delta=1/2$) we obtain (\ref{3.5}). This completes the proof of the modified 
estimate (\ref{3.5}). 

\end{proof} 

Although the inequality (\ref{3.5}) will be the one (and not Lemma~\ref{lem:3.1}) frequently used in our analysis, let us mention how Lemma~\ref{lem:3.1} follows from (\ref{3.5}),
which is quite straightforward. Indeed, it is done via the obvious estimates 
\begin{equation}
 \label{3.24}
 \|\nabla\Bu-\BB\|\leq Ch\|\nabla\Bu\|,\quad \|e(\Bu)-\BB^{sym}\|\leq Ch\|\nabla\Bu\|
 \end{equation}
 by Cauchy inequality applied to the product term $\|\BB\|\|u_t\|$ in the form $ab\leq \delta a^2+\frac{1}{\delta} b^2,$ with the parameter $\delta>0$ chosen sufficiently small.

\subsection{Ansatz-free lower bound: Theorem~\ref{2.1}}
\label{sub:3.2} 

We start with the regime $\epsilon\in [h,\sqrt{h}],$ which is the easier one. We have from the Poincar\'e inequality in the $\Gth$ direction that 
$$\|\Bu\|\leq C\epsilon\|\partial_\Gth \Bu\| $$
From the form of the matrix $\BB$ we have 
$$\|\partial_\Gth \Bu \|\leq C(\|\BB\|+\|\Bu\|),$$
thus we have 
$$\|\Bu \|\leq C\epsilon(\|\BB\|+\|\Bu\|)$$
which for sufficiently small $h$ (depending on the constant $C$ right above) implies, taking into account the bound $\epsilon\leq \sqrt{h}$,
\begin{equation}
\label{3.25}
\|\Bu \|\leq C\epsilon\|\BB\|.
\end{equation}
Consequently combining (\ref{3.25}) and (\ref{3.5}) we have for small enough $h$ the bound 
\begin{equation}
\label{3.26}
\|\BB \|\leq \frac{C\epsilon}{h}\|\BB^{sym}\|, 
\end{equation}
regardless of the curvature of $S.$ Now, as in the case of (\ref{3.5}) and Lemma~\ref{3.1} discussed above, the estimate (\ref{2.5}) with $S^{h,\epsilon}$ in place of 
$\Omega^{h,\epsilon}$ follows from (\ref{3.26}) and (\ref{3.24}). As already mentioned below, the passage from $S^{h,\epsilon}$ to $\Omega^{h,\epsilon}$ in the estimates (\ref{2.5})-(\ref{2.8})
will be demonstrated later in Section~3.4.

\subsection{Ansatz-free lower bounds: Theorem~\ref{th:2.2}}
\label{sub:3.3} 

This corresponds to the regime $\epsilon\in (\sqrt{h},1]$ and the curvature matters here as noted in Theorem~\ref{th:2.2}, thus we will consider each case separately. But before getting into each case, we we start by some preliminary calculation following [\ref{bib:Harutyunyan.2}]. The idea is to first eliminate the component $u_t$ 
from the $22$ and $33$ entries of the matrix $\BB,$ where at the same time utilizing the $23$ and $32$ entries of $\BB,$ we end up with an equality involving 
only the in-plane components $u_\Gth,$ $u_z$ and the symmetric part $\BB^{sym},$ which, roughly speaking, will in tern provide us with a Carleman-like 
estimate for the operator $e(\Bu).$ Let $\lambda>0$ be a large parameter to be determined later. Like in Section~\ref{sub:3.1} we will be working 
with the inner product $(f,g)_E.$ We have that  
$$(B^{sym}_{22}, e^{\lambda z}\Gk_zu_z)_E=\int_E e^{\lambda z}A_z\Gk_zu_zu_{\Gth,\Gth}+\int_E e^{\lambda z} A_\Gth A_z\Gk_\Gth\Gk_z u_zu_t+
\int_E e^{\lambda z}A_{\Gth,z} \Gk_z u_z^2,$$
and
$$
(B^{sym}_{33}, e^{\lambda z}\Gk_\Gth u_z)_E=\int_E e^{\lambda z} A_\Gth \Gk_\Gth u_zu_{z,z}+\int_E e^{\lambda z}A_\Gth A_z\Gk_\Gth\Gk_z u_zu_t+
\int_Ee^{\lambda z}A_{z,\Gth} \Gk_\Gth u_\Gth u_z,
$$
thus we can eliminate the $u_t$ term on the right to get
\begin{align}
\label{3.27}
(B^{sym}_{22},& e^{\lambda z}\Gk_zu_z)_E-(B^{sym}_{33}, e^{\lambda z}\Gk_\Gth u_z)_E\\ \nonumber
&=\int_E e^{\lambda z}A_z\Gk_zu_zu_{\Gth,\Gth}+\int_E e^{\lambda z}A_{\Gth,z} \Gk_z u_z^2
-\int_E e^{\lambda z} A_\Gth \Gk_\Gth u_zu_{z,z}-\int_Ee^{\lambda z}A_{z,\Gth} \Gk_\Gth u_\Gth u_z.
\end{align}
The second and the fourth summands on the right of (\ref{3.27}) appear in the desired form, while the first and the third do not. For the third summand we have integrating by parts and utilizing the zero Dirichlet boundary conditions, 
\begin{equation}
\label{3.28}
-\int_E e^{\lambda z} A_\Gth \Gk_\Gth u_zu_{z,z}=\frac{1}{2}\int_\Omega\frac{\partial}{\partial z}(e^{\lambda z} A_\Gth \Gk_\Gth )u_z^2.
\end{equation}
while the adjustment of the first summand is more involved, indeed we have 
\begin{equation}
\label{3.29}
\int_E e^{\lambda z}A_z\Gk_zu_zu_{\Gth,\Gth}=\int_E\frac{\partial}{\partial\Gth}(e^{\lambda z} A_z\Gk_z) u_{\Gth}u_z -\int_E e^{\lambda z} A_z\Gk_zu_{\Gth}u_{z,\Gth}.
\end{equation}
The second summand in (\ref{3.29}) still does not have the required "good" form, thus we modify it as follows. 
Recall the below equality coming from the $23$ and $32$ terms of the matrix $\BB:$ 
$$A_zu_{z,\Gth}=2A_\Gth A_z B_{23}^{sym}+A_{\Gth,z}u_\Gth+A_{z,\Gth}u_z-A_\Gth u_{\Gth,z}$$
hence we can calculate integrating by parts in $\Gth$ as follows:
\begin{align}
\label{3.30}
\int_E &(e^{\lambda z}A_z\Gk_z) u_{\Gth}u_{z,\Gth}\\ \nonumber
&=2\int_E e^{\lambda z} \Gk_zA_\Gth A_zB_{23}^{sym}u_{\Gth}+\int_E e^{\lambda z} \Gk_z A_{\Gth,z}u_{\Gth}^2+
\int_E e^{\lambda z}\Gk_zA_{z,\Gth}u_{\Gth}u_z- \int_E e^{\lambda z} \Gk_z  A_\Gth u_{\Gth}u_{\Gth,z}\\ \nonumber
&=2\int_E e^{\lambda z} \Gk_zA_\Gth A_zB_{23}^{sym}u_{\Gth}+\int_E e^{\lambda z} \Gk_z A_{\Gth,z}u_{\Gth}^2+
\int_E e^{\lambda z}\Gk_zA_{z,\Gth}u_{\Gth}u_z+\frac{1}{2}\int_\Omega \frac{\partial}{\partial z}(e^{\lambda z} \Gk_z A_\Gth)u_{\Gth}^2.
\end{align}
Consequently combining equalities (\ref{3.27})-(\ref{3.30}) we arrive at the key idntity
\begin{align}
\label{3.31}
F(u_\Gth,u_z)=(B_{22}^{sym},e^{\lambda z}\Gk_zu_z)_E-(B_{33}^{sym},e^{\lambda z}\Gk_\Gth u_z)_E+2(B_{23}^{sym}, e^{\lambda z}\Gk_zu_\Gth)_E
\end{align}
where the quadratic form $F(u_\Gth,u_z)$ is given by 
\begin{align}
\label{3.32}
F(u_\Gth,u_z)&=-\int_E \left(\frac{1}{2}\frac{\partial}{\partial z}(e^{\lambda z}\Gk_z A_\Gth)+e^{\lambda z}\Gk_z A_{\Gth,z}\right)u_{\Gth}^2
+\int_E\left(e^{\lambda z}A_{\Gth,z} \Gk_z+\frac{1}{2}\frac{\partial}{\partial z}(e^{\lambda z} A_\Gth \Gk_\Gth )\right)u_z^2\\ \nonumber
&-\int_E\left(e^{\lambda z}A_{z,\Gth}(\Gk_\Gth+\Gk_z)+\frac{\partial}{\partial\Gth}(e^{\lambda z}A_z\Gk_z)\right)u_{\Gth}u_z.
\end{align}
Now having (\ref{3.31}) and (\ref{3.32}) in hand we can proceed with each case separately.\\

\noindent  \textbf{Nagative Gaussian curvature: $K_G<0.$} 
 \begin{proof}
 The purpose of introduction of the function $e^{\lambda z}$ into the analysis and the magic followed by it is as follows. Note that there is a $z$ derivative in both of the coefficients of $u_\Gth^2$ and $u_z^2$ in the quadratic form $F,$ while there is only a $\Gth$ derivative in the coefficient of the product term $u_\Gth u_z.$ 
This means that the coefficients of $u_\Gth^2$ and $u_z^2$ can be made arbitrarily large compared to the coefficient of $u_\Gth u_z,$ by choosing the constant $\lambda$ large enough,
thus making the quadratic form $F$ positive or negative definite provided the coefficients of $u_\Gth^2$ and $u_z^2$ have the same constant sign, which is exactly the case of hyperbolic 
mid-surface $S.$ If $\lambda>0$ is a big enough constant, then according to the bounds (\ref{2.3}) and the condition $K_G<0$ on $E,$ we have that the leading term of the coefficient of 
 $u_\Gth^2$ and $u_z^2$ in $F$ are $\frac{1}{2}\lambda e^{\lambda z}A_\Gth\Gk_\Gth$ and $-\frac{1}{2}\lambda e^{\lambda z}A_\Gth\Gk_z$ respectively, thus there exists constants 
 $C,c>0$ (the constant $c$ depends only on the domain $O(1)$ parameters too) such that we have 
 \begin{equation}
\label{3.33}
|F(u_\Gth,u_z)|\geq C\lambda (\|e^{\frac{\lambda z}{2}}u_\Gth\|^2_{L^2(E)}+\|e^{\frac{\lambda z}{2}}u_z\|^2_{L^2(E)}),\quad \text{for all}\quad \lambda\geq c.
\end{equation} 
 We have on the other hand from (\ref{3.31}) by the Schwartz inequality that 
\begin{equation}
\label{3.34}
|F(u_\Gth,u_z)|\leq C\|e^{\frac{\lambda z}{2}}\BB^{sym}\|_{L^2(E)} (\|e^{\frac{\lambda z}{2}}u_\Gth\|_{L^2(E)}+\|e^{\frac{\lambda z}{2}}u_z\|_{L^2(E)}),\quad \text{for all}\quad \lambda\geq 0.
\end{equation}
Combining now (\ref{3.33}) and (\ref{3.34}) we obtain the Carleman estimate
\begin{equation}
\label{3.35}
\|e^{\frac{\lambda z}{2}}u_\Gth\|_{L^2(E)}+\|e^{\frac{\lambda z}{2}}u_z\|_{L^2(E)}\leq	 \frac{C}{\lambda}\|e^{\frac{\lambda z}{2}}\BB^{sym}\|_{L^2(E)} \quad \text{for all}\quad \lambda\geq c.
\end{equation}
We can now make the choice $\lambda=\frac{c}{\epsilon}$ which clearly satisfies the inequality $\lambda\geq c.$ Also note that one has
the obvious bounds $\frac{cz_1(\Gth)}{\epsilon}\leq \lambda z\leq \frac{cz_1(\Gth)}{\epsilon}+cc_3$ for any $(\Gth,z)\in E,$ thus we get from (\ref{3.35}),
\begin{equation}
\label{3.36}
\|u_\Gth\|_{L^2(E)}+\|u_z\|_{L^2(E)}\leq C\epsilon \|\BB^{sym}\|_{L^2(E)}.
\end{equation}
Consequently, upon squaring and integrating the last inequality in $t\in (-h,h)$ we discover the key inequality 
\begin{equation}
\label{3.37}
\|u_\Gth\|_{L^2(S^h)}^2+\|u_z\|_{L^2(S^{h,\epsilon})}^2\leq C\epsilon^2 \|\BB^{sym}\|_{L^2(S^{h,\epsilon})}.
\end{equation}
We now turn our attention to the estimation of the out-of-plane $u_t$ component. To that end we multiply the equality 
$$B^{sym}_{22}=\Gk_\Gth u_t+\frac{1}{A_\Gth A_z}(A_z u_{\Gth,\Gth}+A_{\Gth,z}u_z).$$
by $A_\Gth A_zu_t$ and integrate the obtained identity over $S^{h,\epsilon},$ where we throw the $\Gth$ derivative of the summand $u_{\Gth,\Gth}$ over $u_t$ by integration by parts. 
Keeping in mind the bound (\ref{3.37}), this leads to the estimate
\begin{equation}
\label{3.38}
\|u_t\|_{L^2(S^{h,\epsilon})}^2\leq C\|\BB^{sym}\|_{L^2(S^{h,\epsilon})}(\|u_t\|_{L^2(S^{h,\epsilon})}+\epsilon\|u_{t,\Gth}\|_{L^2(S^{h,\epsilon})})
\end{equation}
Next we have from the $12$ component of the matrix $\BB,$ and from (\ref{3.36}), that 
\begin{equation}
\label{3.39}
\|u_{t,\Gth}\|_{L^2(S^{h,\epsilon})}\leq C(\|\BB\|_{L^2(S^{h,\epsilon})}+\epsilon \|\BB^{sym}\|_{L^2(S^{h,\epsilon})}),
\end{equation}
hence we get combining (\ref{3.38}) and (\ref{3.39}) the bound 
\begin{equation}
\label{3.40}
\|u_t\|_{L^2(S^{h,\epsilon})}^2\leq C\|\BB^{sym}\|_{L^2(S^{h,\epsilon})}(\epsilon \|\BB\|_{L^2(S^{h,\epsilon})}+\|\BB^{sym}\|_{L^2(S^{h,\epsilon})}).
\end{equation}
In conclusion we combine (\ref{3.40}), (\ref{3.37}) and (\ref{3.5}) to arrive at the desired estimate 
\begin{equation}
\label{3.41}
\|\BB\|_{L^2(S^h)}^2\leq \frac{C\epsilon^{2/3}}{h^{4/3}}\|\BB^{sym}\|_{L^2(S^h)}^2.
\end{equation}
The bound (\ref{2.7}) with $S^{h,\epsilon}$ in place of $\Omega^{h,\epsilon}$ now follows from (\ref{3.41}) via the well-known manner 
through the bounds (\ref{3.24}). The proof for the negative curvature shell case is finished. 
\end{proof}

\noindent \textbf{Positive Gaussian curvature: $K_G>0.$} 

\begin{proof}
The proof of this case consists of several observations. The first observation comes again from the identities (\ref{3.31}) and (\ref{3.32}), where it is clear that by choosing the constant
$\lambda>0$ big enough, we achieve the existence of a constant $C>0$ such that 
\begin{equation}
\label{3.42}
\|u_\Gth\|^2_{L^2(S^h)} \leq Ce^{C\epsilon}(\|u_z\|^2_{L^2(S^h)}+\|\BB^{sym}\|^2_{L^2(S^h)}),\quad \|u_z\|^2_{L^2(S^h)}\leq  Ce^{C\epsilon}(\|u_\Gth\|^2_{L^2(S^h)}+\|\BB^{sym}\|^2_{L^2(S^h)}).
\end{equation}
In the the second step we bound the component $u_t.$ We have for a fixed $t\in (-h,h),$ and the positivity $\Gk_\Gth \Gk_z>0$ that
\begin{equation}
\label{3.43}
(B_{22}^{sym}-\Gk_\Gth u_t, B_{33}^{sym}-\Gk_z u_t)_E \geq C(\|u_t\|^2-\|\BB^{sym}\|^2),
\end{equation}
for some constant $C>0.$ On the other hand recalling the form of the matrix $\BB,$ we have integrating by parts
\begin{align}
\label{3.44}
&(B_{22}^{sym}-\Gk_\Gth u_t, B_{33}^{sym}-\Gk_z u_t)_E\\ \nonumber
&=\int_E \left(u_{\Gth,\Gth}+\frac{A_{\Gth,z}}{A_\Gth}u_z\right)\left(u_{z,z}+\frac{A_{z,\Gth}}{A_z}u_\Gth\right)\\ \nonumber
&=\int_E u_{\Gth,\Gth}u_{z,z}+\int_E\frac{A_{\Gth,z}A_{z,\Gth}}{A_\Gth A_z}u_zu_\Gth
+\int_E\frac{A_{\Gth,z}}{A_\Gth}u_zu_{z,z}+\int_E\frac{A_{z,\Gth}}{A_z}u_\Gth u_{\Gth,\Gth}\\ \nonumber
&=\int_E u_{\Gth,z}u_{z,\Gth}+\int_E\frac{A_{\Gth,z}A_{z,\Gth}}{A_\Gth A_z}u_\Gth u_z-\frac{1}{2}\int_E\frac{\partial}{\partial z}\left(\frac{A_{\Gth,z}}{A_\Gth}\right)u_z^2
-\frac{1}{2}\int_E\frac{\partial}{\partial\Gth}\left(\frac{A_{z,\Gth}}{A_z}\right)u_\Gth^2.
\end{align}
Note that the righthand side of (\ref{3.44}) is a quadratic form in $(u_\Gth,u_z)$ modulo the first summand. Also, that first summand occurs from the following multiplication when 
integrating by parts some of the occurring summands to make them a quadratic:
\begin{align}
\label{3.45}
(B_{23}, &2B^{sym}_{23}-B_{23})_E=\int_E\left(u_{z,\Gth}-\frac{A_{\Gth,z}}{A_\Gth}u_\Gth\right)\left(u_{\Gth,z}-\frac{A_{z,\Gth}}{A_z}u_z\right)\\ \nonumber
&=\int_E u_{\Gth,z}u_{z,\Gth}+\int_E\frac{A_{\Gth,z}A_{z,\Gth}}{A_\Gth A_z}u_zu_\Gth
+\frac{1}{2}\int_E\frac{\partial}{\partial z}\left(\frac{A_{\Gth,z}}{A_\Gth}\right)u_\Gth^2
+\frac{1}{2}\int_E\frac{\partial}{\partial \Gth}\left(\frac{A_{z,\Gth}}{A_z}\right)u_z^2.
\end{align}
Thus putting together (\ref{3.44}) and (\ref{3.45}) we obtain
\begin{equation}
\label{3.46}
(B_{22}^{sym}-\Gk_\Gth u_t, B_{33}^{sym}-\Gk_z u_t)_E=(B_{23}, 2B^{sym}_{23}-B_{23})_E
-\int_E\left(\frac{\partial}{\partial z}\left(\frac{A_{\Gth,z}}{A_\Gth}\right)+\frac{\partial}{\partial \Gth}\left(\frac{A_{z,\Gth}}{A_z}\right)\right)(u_\Gth^2+u_z^2).
\end{equation}
Utilizing the obvious quadratic form estimate $(B_{23}, 2B^{sym}_{23}-B_{23})_E\leq (B^{sym}_{23},B^{sym}_{23})_E,$ and invoking (\ref{3.43}), we discover from (\ref{3.46}) the key 
estimate (after an integration in $t\in (-h,h)$), 
\begin{equation}
\label{3.47}
\|u_t\|^2_{L^2(S^{h,\epsilon})}\leq C(\BB^{sym}_{L^2(S^{h,\epsilon})}+\|u_\Gth\|^2_{L^2(S^{h,\epsilon})}+\|u_z\|^2)_{L^2(S^{h,\epsilon})}.
\end{equation}
Next, Poincar\'e inequality in the $z$ direction implies that 
\begin{equation}
\label{3.48}
\|u_z\|^2_{L^2(S^{h,\epsilon})}\leq C \epsilon^2 \|u_{z,z}\|^2_{L^2(S^{h,\epsilon})}.
\end{equation}
for some constant $C>0.$ Consequently combining the bounds (\ref{3.42}), (\ref{3.47}), and (\ref{3.48}) we arrive at the key inequality
\begin{equation}
\label{3.49}
\|u_z\|^2_{L^2(S^{h,\epsilon})} \leq C\epsilon^2 e^{C\epsilon}(\|\BB^{sym}\|^2_{L^2(S^{h,\epsilon})}+\|u_z\|^2_{L^2(S^{h,\epsilon})}).
\end{equation}
It is clear that if $\epsilon$ is small enough, such that $C\epsilon^2 e^{C\epsilon}<1,$ then (\ref{3.49}) implies the bound $\|u_z\|_{L^2(S^{h,\epsilon})} \leq C\|\BB^{sym}\|_{L^2(S^{h,\epsilon})},$
which in turn would imply the similar bounds $\|u_\Gth\|_{L^2(S^{h,\epsilon})} \leq C\|\BB^{sym}\|_{L^2(S^{h,\epsilon})}$ and $\|u_t\|_{L^2(S^{h,\epsilon})} \leq C\|\BB^{sym}\|_{L^2(S^{h,\epsilon})}$
due to (\ref{3.42}) and (\ref{3.47}). We have thus obtained the following: \textit{There exists $\epsilon_0>0$ depending on the parameters of $S,$ such that one has 
\begin{equation}
\label{3.50}
\|u_t\|_{L^2(S^{h,\epsilon})}+\|u_\Gth\|_{L^2(S^{h,\epsilon})}+\|u_z\|_{L^2(S^{h,\epsilon})} \leq C\|\BB^{sym}\|_{L^2(S^{h,\epsilon})}
\end{equation}
for all $\epsilon\in (0,\epsilon_0).$} Finally we combine (\ref{3.50}) with (\ref{3.5}) to get (\ref{2.6}) with $S^{h,\epsilon}$ in place of $\Omega^{h,\epsilon}.$

\end{proof}

\noindent \textbf{Zero Gaussian curvature: $\Gk_z=0, |\Gk_\Gth|\geq K>0.$} 

\begin{proof}
We using the Gauss-Codazzi relations [\ref{bib:Tov.Smi.}, Section~1.1] one can obtain a rather explicit form for the metric tensor components $A_\Gth$ and $A_z$ as well as for the nonzero curvature $\Gk_\Gth.$ 
Indeed, taking into account the equality $\Gk_z=0,$ the Gauss-Codazzi relations 
\begin{align}
  \label{3.51}
  &\dif{\Gk_{z}}{\Gth}=(\Gk_{\Gth}-\Gk_{z})\frac{A_{ z,\Gth}}{A_{ z}},\qquad
\dif{\Gk_{\Gth}}{ z}=(\Gk_{z}-\Gk_{\Gth})\frac{A_{\Gth, z}}{A_{\Gth}},\\ \nonumber
& \dif{}{ z}\left(\frac{A_{\Gth, z}}{A_{ z}}\right)+
\dif{}{\Gth}\left(\frac{A_{ z,\Gth}}{A_{\Gth}}\right)=-A_{ z}A_{\Gth}\Gk_{z}\Gk_{\Gth},
\end{align}
reduce to 
\begin{equation*}
A_{z,\Gth}=0, \quad \frac{\partial \Gk_\Gth}{\partial z}=-\Gk_\Gth\frac{A_{\Gth,z}}{A_\Gth},\quad \frac{\partial }{\partial z} \left(\frac{A_{\Gth,z}}{A_z}\right)=0,
\end{equation*}
solving which we get the explicit forms 
\begin{equation}
  \label{3.52}
  A_{z}=B'(z),\qquad A_{\Gth}=a(\Gth)B(z)+b(\Gth),\qquad\Gk_{\Gth}=\frac{c(\Gth)}{A_{\Gth}},
\end{equation}
where the functions $A_{z}$, $A_{\Gth}$ and $c(\Gth)$ are strictly positive on $E.$ Thus we get the following forms for the gradient and the matrix $\BB:$ 
\begin{equation}
  \label{3.53}
\nabla\Bu=
\begin{bmatrix}
  u_{t,t} & \dfrac{u_{t,\Gth}-c(\Gth)u_{\Gth}}{A_{\Gth}+tc(\Gth)} &
\dfrac{u_{t,z}}{A_{z}}\\[3ex]
u_{\Gth,t}  &
\dfrac{u_{\Gth,\Gth}+c(\Gth)u_{t}+a(\Gth)u_{z}}{A_{\Gth}+tc(\Gth)} &
\dfrac{u_{\Gth,z}}{A_{z}}\\[3ex]
u_{ z,t}  & \dfrac{u_{z,\Gth}-a(\Gth)u_{\Gth}}{A_{\Gth}+tc(\Gth)} &
\dfrac{u_{z,z}}{A_{z}}
\end{bmatrix},
\quad
 \BB=
\begin{bmatrix}
  u_{t,t} & \dfrac{u_{t,\Gth}-c(\Gth)u_{\Gth}}{A_{\Gth}} &
\dfrac{u_{t,z}}{A_{z}}\\[3ex]
u_{\Gth,t}  &
\dfrac{u_{\Gth,\Gth}+c(\Gth)u_{t}+a(\Gth)u_{z}}{A_{\Gth}} &
\dfrac{u_{\Gth,z}}{A_{z}}\\[3ex]
u_{ z,t}  & \dfrac{u_{z,\Gth}-a(\Gth)u_{\Gth}}{A_{\Gth}} &
\dfrac{u_{z,z}}{A_{z}}
\end{bmatrix}.
\end{equation}
Note first that by Poincar\'e inequality we can bound in this case,
\begin{equation}
  \label{3.54}
\|u_z\|^2_{L^2(S^{h,\epsilon})} \leq \epsilon^2 \|u_{z,z}\|^2_{L^2(S^{h,\epsilon})}\leq C\epsilon^2 \|\BB^{sym}\|^2_{L^2(S^{h,\epsilon}))},
\end{equation} 
as $u_{z,z}=A_zB_{33}^{sym}.$ We have on the other hand integrating by parts
\begin{align}
\label{3.55}
(B_{23}, B_{32})_E&=\int_E u_{\Gth,z}(u_{z,\Gth}-a(\Gth)u_\Gth)\\ \nonumber 
&=\int_E u_{\Gth,\Gth}u_{z,z}.
\end{align}
We have further 
\begin{align}
\label{3.56}
(B_{22}^{sym}, B_{33}^{sym})_E&=\int_E u_{z,z}(u_{\Gth,\Gth}+c(\Gth)u_t+a(\Gth)u_z),
\end{align}
 thus combining (\ref{3.55}) and (\ref{3.56}) and taking into account the equality $B_{32}=2B_{23}^{sym}-B_{23},$ we obtain
$$(B_{23}, 2B_{23}^{sym}-B_{23})_E=(B_{33}^{sym},B_{22}^{sym}-c(\Gth)u_t-a(\Gth)u_z),$$
which yields the estimate (upon integration in $t\in (-h,h)$),
\begin{equation}
  \label{3.57}
\|B_{23}\|_{L^2(S^{h,\epsilon})}\leq C \|\BB^{sym}\|_{L^2(S^{h,\epsilon})}(\|\BB^{sym}\|_{L^2(S^{h,\epsilon})}+\|u_t\|_{L^2(S^{h,\epsilon})}+\|u_z\|_{L^2(S^{h,\epsilon})}).  
 \end{equation} 
 Note next that an application of (\ref{3.54}) simplifies (\ref{3.57}) to the form 
\begin{equation}
  \label{3.58}
\|B_{23}\|_{L^2(S^{h,\epsilon})}^2\leq C \|\BB^{sym}\|_{L^2(S^{h,\epsilon})}(\|\BB^{sym}\|_{L^2(S^{h,\epsilon})}+\|u_t\|_{L^2(S^{h,\epsilon})}).  
 \end{equation} 
Next we estimate the component $u_\Gth$ by noting that Poincar\'e inequality gives on one hand the bound 
$$\|u_\Gth\|^2_{L^2(S^{h,\epsilon})} \leq \epsilon^2 \|u_{\Gth,z}\|^2_{L^2(S^{h,\epsilon})},$$  
 and on the other hand we have $u_{\Gth,z}=A_zB_{23}$, thus taking into account the estimate (\ref{3.58}) we get 
 \begin{equation}
  \label{3.59}
\|u_\Gth\|_{L^2(S^{h,\epsilon})}^2\leq C\epsilon^2 \|\BB^{sym}\|_{L^2(S^{h,\epsilon})}(\|\BB^{sym}\|_{L^2(S^{h,\epsilon})}+\|u_t\|_{L^2(S^{h,\epsilon})}).  
 \end{equation}  
In order to estimate the out-of-plane $u_t$ component we calculate the inner product by integration by parts:
\begin{align}
\label{3.60}
\left(\frac{u_t}{A_z},B_{22}^{sym}\right)_E&=\int_E u_t(u_{\Gth,\Gth}+c(\Gth)u_t+a(\Gth)u_z)\\ \nonumber
&=\int_E u_{t,\Gth}u_{\Gth}+\int_E c(\Gth)|u_t|^2+\int_E a(\Gth)u_zu_t.
\end{align}
Integrating (\ref{3.60}) in $t\in (-h,h)$ and applying the Schwartz inequality to the product terms we get 
\begin{equation}
\label{3.61}
\|u_t\|_{L^2(S^{h,\epsilon})}^2\leq C\|u_t\|_{L^2(S^{h,\epsilon})}(\|\BB^{sym}\|_{L^2(S^{h,\epsilon})}+\|u_z\|_{L^2(S^{h,\epsilon})})+\|u_\Gth\|_{L^2(S^{h,\epsilon})}\|u_{t,\Gth}\|_{L^2(S^{h,\epsilon})}.
\end{equation}   
 Next we have $u_{t,\Gth}=A_\Gth B_{12}+c(\Gth)u_\Gth,$ thus we have 
 \begin{equation}
\label{3.62}
\|u_{t,\Gth}\|_{L^2(S^{h,\epsilon})} \leq C(\|\BB\|_{L^2(S^{h,\epsilon})}+\|u_{\Gth}\|_{L^2(S^{h,\epsilon})}).
\end{equation} 
It is now easy to see that putting together the estimates (\ref{3.5}), (\ref{3.54}), (\ref{3.59}), (\ref{3.61}), and  (\ref{3.62}) we discover 
\begin{equation}
\label{3.63}
\|u_t\|_{L^2(S^{h,\epsilon})} \leq \frac{C\epsilon}{\sqrt{h}}\|\BB^{sym}\|_{L^2(S^{h,\epsilon})}.
\end{equation} 
finally we combine (\ref{3.5}), (\ref{3.54}), (\ref{3.59}), and (\ref{3.63}) to get the estimate 
\begin{equation}
\label{3.64}
\|\BB\|_{L^2(S^{h,\epsilon})}^2 \leq \frac{C\epsilon}{h\sqrt{h}}\|\BB^{sym}\|_{L^2(S^{h,\epsilon})}^2.
\end{equation} 
As already understood before, the inequality (\ref{2.8}) (with $S^{h,\epsilon}$ in place of $\Omega^{h,\epsilon}$) follows from the similar estimate 
(\ref{3.64}) through the bounds (\ref{3.24}). 
\end{proof}

\subsection{Passage from $S^{h,\epsilon}$ to $\Omega^{h,\epsilon}$ in Theorems~2.1 and 2.2}

The passage from the shell $S^{h,\epsilon}$ to the thin domain $\Omega^{h,\epsilon}$ is done in a well-known fashion through a localization argument. 
The idea is based on the fact that if the domain $\Omega$ has comparable dimensions in three mutually orthogonal directions, then the constant $C_1$ in (\ref{1.3}) 
depends only on the Lipschitz constant of $\Omega$ and the constants controlling the size ratio in that orthogonal directions. We recall the following simple 
lemma proven in [\ref{bib:Harutyunyan.3}, Lemma~5.2]. 

\begin{lemma}
\label{lem:3.4}
Assume $D_1\subset D_2\subset \mathbb R^3$ are open bounded connected Lipschitz domains. By Korn's first inequality, there exist constants $K_1$ and $K_2$ such that for any vector field $\BU\in W^{1,2}(D_2,\mathbb R^3),$ there exist skew-symmetric matrices $\BA_1,\BA_2\in\mathbb M^{3\times 3},$ such that 
\begin{equation}
\label{3.65}
\|\nabla\BU-\BA_1\|_{L^2(D_1)} \leq K_1\|e(\BU)\|_{L^2(D_1)},\quad \|\nabla\BU-\BA_2\|_{L^2(D_2)} \leq K_2\|e(\BU)\|_{L^2(D_2)}.
\end{equation}
The assertion is that there exists a constant $C>0$ depending only on the quantities $K_1,K_2$ and $\frac{|D_2|}{|D_1|},$ such that for any 
vector field $\BU\in W^{1,2}(D_2, \mathbb R^n)$ one has
\begin{equation}
\label{3.66}
\|\nabla\BU\|_{L^2(D_2)} \leq C(\|\nabla\BU\|_{L^2(D_1)}+\|e(\BU)\|_{L^2(D_2)}).
\end{equation}
\end{lemma}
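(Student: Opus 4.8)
The plan is to exploit that the skew-symmetric matrices furnished by Korn's inequality are \emph{constant} matrix fields, so that for any bounded measurable $D\subset\mathbb{R}^3$ one has $\|\BA\|_{L^2(D)}=|\BA|\,|D|^{1/2}$ (Frobenius norm times the square root of the measure); in particular the $L^2$-norm of such a field transfers between $D_1$ and $D_2$ at the cost of only the factor $(|D_2|/|D_1|)^{1/2}$. Accordingly I would use only the second estimate in (\ref{3.65}) — the rotation $\BA_1$ and the constant $K_1$ will not actually be needed — starting from the triangle inequality together with the scaling just mentioned:
\begin{equation*}
\|\nabla\BU\|_{L^2(D_2)}\leq \|\nabla\BU-\BA_2\|_{L^2(D_2)}+\|\BA_2\|_{L^2(D_2)},\qquad
\|\BA_2\|_{L^2(D_2)}=\sqrt{\tfrac{|D_2|}{|D_1|}}\,\|\BA_2\|_{L^2(D_1)}.
\end{equation*}

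Next I would control $\|\BA_2\|_{L^2(D_1)}$ by comparing $\BA_2$ with $\nabla\BU$ on the \emph{small} domain and then enlarging the resulting defect term back to $D_2$ (legitimate since $D_1\subset D_2$):
\begin{equation*}
\|\BA_2\|_{L^2(D_1)}\leq \|\nabla\BU\|_{L^2(D_1)}+\|\nabla\BU-\BA_2\|_{L^2(D_1)}\leq \|\nabla\BU\|_{L^2(D_1)}+\|\nabla\BU-\BA_2\|_{L^2(D_2)}.
\end{equation*}
Substituting this into the previous display and invoking $\|\nabla\BU-\BA_2\|_{L^2(D_2)}\leq K_2\|e(\BU)\|_{L^2(D_2)}$ from (\ref{3.65}) yields
\begin{equation*}
\|\nabla\BU\|_{L^2(D_2)}\leq \Bigl(1+\sqrt{\tfrac{|D_2|}{|D_1|}}\Bigr)K_2\,\|e(\BU)\|_{L^2(D_2)}+\sqrt{\tfrac{|D_2|}{|D_1|}}\,\|\nabla\BU\|_{L^2(D_1)},
\end{equation*}
which is precisely (\ref{3.66}) with $C=\bigl(1+\sqrt{|D_2|/|D_1|}\bigr)\max\{K_2,1\}$ — a constant depending only on $K_2$ and the volume ratio $|D_2|/|D_1|$, hence in particular only on $K_1,K_2$ and $|D_2|/|D_1|$ as claimed.

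There is essentially no serious obstacle here: the statement reduces to two uses of the triangle inequality plus the scaling of a constant matrix field under change of domain. The only point requiring a little care is that the skew matrices $\BA_1,\BA_2$ produced by Korn's inequality on the two domains need not coincide, so one should resist comparing them directly; the argument above sidesteps this entirely by working with $\BA_2$ alone and with the inclusion $D_1\subset D_2$. If one insisted on a proof that genuinely uses both estimates of (\ref{3.65}), one could instead estimate $\|\BA_1-\BA_2\|_{L^2(D_1)}\leq K_1\|e(\BU)\|_{L^2(D_1)}+K_2\|e(\BU)\|_{L^2(D_2)}$ and argue analogously, but this refinement is unnecessary.
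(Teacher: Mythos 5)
Your proof is correct. Note that the paper itself does not include a proof of this lemma — it simply cites [Harutyunyan, \emph{SIAM J. Math. Anal.} 50 (2018), Lemma~5.2] — but the argument you give (triangle inequality around $\BA_2$, rescaling the $L^2$-norm of the constant field by $\sqrt{|D_2|/|D_1|}$, and monotonicity of the $L^2$-norm under the inclusion $D_1\subset D_2$) is the natural and essentially only route, and it in fact yields a constant depending only on $K_2$ and the volume ratio, which is stronger than the stated dependence on $K_1,K_2$ and $|D_2|/|D_1|$.
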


In order to utilize the lemma, we divide $\Omega$ into small parts with size of order $h$ and extend the existing local estimate on all smaller parts in the normal to $S$ direction to the bigger (but still of order $h$) parts containing it. Let $\Bu=(\bar u_1,\bar u_2, \bar u_3)$ be $\Bu$ in Cartesian coordinates $\Bx=(x_1,x_2,x_3),$ and let
$D\Bu$ denote the Cartesian gradient. We divide the domains $\Omega^{h,\epsilon}$ into small pieces of order $h$ by dividing each parameter range $\Gth$ and $z$ 
into small intervals of order $h.$ We will then get roughly $\frac{1}{h}\cdot\frac{\epsilon}{h}=\frac{\epsilon}{h^2}$ small domains of order $h$ with comparable size in the directions 
$t,\Gth,z$ and with uniform Lipschitz constants. Denote the small domains by $\Omega_i^{h,\epsilon},$ where $i=1,2,\dots,N,$ where $N=O(\frac{\epsilon}{h^2})$ and set 
$S_i^{h,\epsilon}=S^{h,\epsilon}\cap\Omega_i^{h,\epsilon}.$ We have by the lemma
\begin{equation}
\label{3.67}
\|D\bar\Bu\|_{L^2(\Omega_i^{h,\epsilon})} \leq C(\|D\bar\Bu\|_{L^2(S_i^{h,\epsilon})}+\|e(\bar\Bu)\|_{L^2(\Omega_i^{h,\epsilon})}),\quad   i=1,2,\dots,N,
\end{equation} 
with a uniform constant $C$ depending only on the thin domain $O(1)$ parameters. Consequently, summing the estimates (\ref{3.67}) in $i=1,2,\dots,N$ we discover
\begin{equation}
\label{3.68}
\|D\bar\Bu\|_{L^2(\Omega^{h,\epsilon})} \leq C(\|D\bar\Bu\|_{L^2(S^{h,\epsilon})}+\|e(\bar\Bu)\|_{L^2(\Omega^{h,\epsilon})}).
\end{equation} 
Finally, combining (\ref{3.68}) with the versions of (\ref{2.5})-(\ref{2.8}) with $S^{h,\epsilon}$ in place of $\Omega^{h,\epsilon},$ we obtain the original versions of 
(\ref{2.5})-(\ref{2.8}).

\section{Ans\"atze realizing the upper bounds}
\setcounter{equation}{0}
\label{sec:4}

\subsection{The regime $\epsilon \in [h, \sqrt{h}]$}
\label{sub:4.1} 

In this regime the construction of the Ansatz is based on an observation made in [\ref{bib:Gra.Har.4},\ref{bib:Harutyunyan.2}]. Namely, one assumes first that 
the displacement $\Bu$ is smooth and depends on the normal variable $t$ linearly, i.e., $\Bu(t,\Gth,z)=\Bu_1(\Gth,z)+t\Bu_2(\Gth,z).$ This leads to the equality 
$$e(\Bu)=\BA_1(\Bu_1,\Bu_2)+t\BA_2(\Bu_1,\Bu_2)$$
 for the symmetrized gradient, where $\BA_1$ and $\BA_2$ are $3\times 3$ matrices depending on $\Bu_1$ and $\Bu_2.$ The next hypothesis is to choose the displacements
$\Bu_1$ and $\Bu_2$ such that $11$, $12$ and $13$ components of the summand $\BA1(\Bu_1,\Bu_2)$ vanish. An easy analysis of the occurring PDEs leads to the following form 
\begin{equation}
\label{4.1}
\begin{cases}
u_t=w,\\
u_\Gth=v-t\left(\frac{w_{,\Gth}}{A_\Gth}-\kappa_\Gth v\right),\\
u_z=s-t\left(\frac{w_{,z}}{A_z}-\kappa_z s\right),
\end{cases}
\end{equation}
where $w,v$ and $s$ are smooth functions in $\Gth$ and $z.$ Next we choose $v=s=0$ and the function $w$ having an oscillation in $\Gth$ as follows:
\begin{equation} 
\label{4.2}
u_t = W\left( \tfrac{\theta}{\sqrt{h}}, z\right),
\quad
u_\theta = - \frac{t}{A_\theta \sqrt{h}} W_{,\theta}\left( \tfrac{\theta}{\sqrt{h}}, z\right),
\quad
u_z = - \frac{t}{A_z} W_{,z}\left( \tfrac{\theta}{\sqrt{h}}, z\right),
\end{equation}
where $W = W(\theta, z)\colon \mathbb R^2\to\mathbb R$ is a smooth function compactly supported on the set $E$ and satisfying the conditions:
\begin{equation} 
\label{4.3}
|W| \sim 1, \quad |W_{,z}| \sim \epsilon^{-1} \quad \text{and} \quad |W_{,zz}| \sim \epsilon^{-2}.
\end{equation}
Moreover, the above estimates also hold if we replace $W$ by its first and second order $\theta$-derivatives. We get by straightforward calculations that
\begin{equation*}
e(\bm{u})_{11} = e(\bm{u})_{12} = e(\bm{u})_{13} = 0, 
\end{equation*}
moreover
\begin{equation*}
\begin{split}
e(\bm{u})_{22} &= \frac{A_z^2 A_\theta^3 \kappa_\theta W - tA_\theta^2 A_{\theta,z} W_{,z} + t h^{-\frac{1}{2}} A_z^2 A_{\theta, \theta} W_{,\theta} - t h^{-1} A_z^2 A_\theta W_{,\theta \theta}}{ A_z^2 A_\theta^3 (1+t \kappa_\theta)},
\\[.1in]
e(\bm{u})_{23} &= \frac{t h^{-\frac{1}{2}} (2 + t \kappa_\theta + t \kappa_z)}{2 A_z^2 A_\theta^2 (1+t \kappa_\theta) (1+t \kappa_z)} \left[ \sqrt{h} A_{z,\theta} A_\theta W_{,z} + A_z \left( A_{\theta,z} W_{,\theta} - A_\theta W_{,\theta z} \right) \right],
\\[.1in]
e(\bm{u})_{33} &= \frac{A_\theta^2 \left[ A_z^3 \kappa_z W + t A_{z,z} W_{,z} \right] - t h^{-\frac{1}{2}} A_z^2 A_{z,\theta} W_{,\theta} - t A_z A_\theta^2 W_{,zz}}{A_z^3 A_\theta^2 (1+t \kappa_z)}.
\end{split}
\end{equation*}
Therefore, we have that
\begin{equation} 
\label{4.4}
\|e(\bm{u})\| \leq C\cdot\max \left\{ (h\epsilon)^{1/2}, \left(\frac{h}{\epsilon}\right)^{3/2}, \frac{h}{\epsilon^{1/2}} \right\}.
\end{equation}
Next, it is easy to compute
\begin{equation*}
(\nabla \bm{u})_{12} = \frac{W_{,\theta}}{\sqrt{h} A_\theta}, 
\qquad \qquad
(\nabla \bm{u})_{13} = \frac{W_{,z}}{A_z},
\end{equation*}
and hence
\begin{equation} 
\label{4.5}
\|\nabla \bm{u}\| \geq C\cdot \max \left\{ \left(\frac{h}{\epsilon}\right)^{1/2},\epsilon^{1/2} \right\}.
\end{equation}
Thus, in the regime $\epsilon \in [h, \sqrt{h}]$ from (\ref{4.4}) and (\ref{4.5}) we find that $\|e(\bm{u})\| \leq C  \left(\frac{h}{\epsilon}\right)^{3/2}$ 
and $\|\nabla \bm{u}\| \geq C \left(\frac{h}{\epsilon}\right)^{1/2}$ so that
\begin{equation} 
\label{4.6}
\|e(\bm{u})\|^2 \leq \frac{Ch^2}{\epsilon^2} \|\nabla \bm{u}\|^2, 
\end{equation}
which holds true no matter whether the Gaussian curvature of the shell is positive, negative or zero.

\subsection{The regime $\epsilon \in [\sqrt{h}, 1]$}
\label{sub:4.2}

\textbf{Positive Gaussian curvature: $K_G>0.$} It is easy to see that in this case too the Ansatz (\ref{4.2}) attains the optimal bound. Indeed, from (\ref{4.4}) and (\ref{4.5}) we find that 
$\|e(\bm{u})\| \leq C(h\epsilon)^{1/2}$ and $\|\nabla \bm{u}\| \geq C\epsilon^{1/2},$ so that
\begin{equation*}
\|e(\bm{u})\|^2 \leq C h \|\nabla \bm{u}\|^2.
\end{equation*}

\noindent \textbf{Negative Gaussian curvature: $K_G<0.$} In this case the Ansatz comes from a modification of an Ansatz constructed by 
Tovstik and Smirnov in [\ref{bib:Tov.Smi.}] for negative curvature shells in the regime $\epsilon\sim 1.$ The first construction steps are the same, i.e., 
let $\Bu$ be above, satisfying (\ref{4.1}) and let $W\colon \mathbb R^2\to\mathbb R$ be a smooth function compactly supported on $E$ satisfying (\ref{4.3}) and 
the analogous conditions for its first and second $\Gth$ derivatives. 
For $n = (\epsilon h)^{- \frac{1}{3}}$ make the choice
\begin{equation}
\label{4.7}
\begin{cases}
w = n W(\theta, z) \sin \left( n f(\theta, z) \right)
\\[.1in]
v = A_\theta \kappa_\theta \frac{W(\theta, z)}{f_{,\theta}(\theta, z)} \cos \left( n f(\theta, z) \right)
\\[.1in]
s = A_z \kappa_z \frac{W(\theta, z)}{f_{,z}(\theta, z)} \cos \left( n f(\theta, z) \right),
\end{cases}
\end{equation}
where $f$ solves the transport equation
\begin{equation} 
\label{4.8}
\frac{\kappa_\theta}{A_z^2} f_{,z}^2 + \frac{\kappa_z}{A_\theta^2} f_{,\theta}^2 = 0.
\end{equation}
Again all the entries in the first row of $e(\bm{u})$ are zero and one can show by direct calculation that
\begin{equation}
\label{4.9}
\|e(\bm{u})_{22}\| \leq  C\left(\frac{h}{\epsilon}\right)^{1/2},
\quad\text{and}\quad \|e(\bm{u})_{33}\| \leq C\left(\frac{h}{\epsilon}\right)^{1/2}. 
\end{equation}
We have further,
\begin{equation} 
\label{4.10}
e(\bm{u})_{23} = - n \frac{ \left[ f_{,z}^2 A_\theta^2 \kappa_\theta + f_{,\theta}^2 A_z^2 \kappa_z \right] \sin(n f) W}{2A_z A_\theta f_{,z} f_{,\theta} (1+t \kappa_z) (1+t \kappa_\theta)} 
+ \frac{1}{\epsilon}\cdot\frac{f_{,z} f_{,\theta} \sin(nf) W}{A_z A_\theta (1+t \kappa_z) (1+t \kappa_\theta)} + \frac{o(1)}{\epsilon}
\end{equation}
where $o(1)\to 0$ as $h\to 0.$ The fact that $f$ solves the PDE in (\ref{4.8}) becomes crucial at this stage to conclude that the first term in (\ref{4.10}) vanishes and therefore 
$\|e(\bm{u})_{23}\| \leq \frac{Ch^{1/2}}{\epsilon}$. Thus, we conclude
\begin{equation} 
\label{4.11}
\|e(\bm{u})\| \leq C\left(\frac{h}{\epsilon}\right)^{1/2}.
\end{equation} 
Next we compute
\begin{equation*}
[\nabla \bm{u}]_{12} = \frac{W \cos(nf) \left[ n^2 f_{,\theta}^2 - \kappa_\theta^2 A_\theta^2 \right] + n W_{,\theta} f_{,\theta} \sin(nf)}{A_\theta f_{,\theta}},
\end{equation*}
and hence
\begin{equation*}
\|\nabla \bm{u}\| \geq C (h\epsilon)^{1/2} n^2 = \frac{C}{(h\epsilon)^{1/6}}.
\end{equation*}
Combining the latter inequality with (\ref{4.11}) we obtain the inequality
\begin{equation*}
\|e(\bm{u})\|^2 \leq  \frac{Ch^{4/3}}{\epsilon^{2/3}} \|\nabla \bm{u}\|^2.
\end{equation*}

\noindent \textbf{Zero Gaussian curvature: $\Gk_z=0, \Gk_\Gth>0.$} The construction in this case is based on the similar Ansatz constructed in [\ref{bib:Gra.Har.4}], 
which we modify for any $\epsilon\in [\sqrt{h},1]$ here. Consider the vector field $\bm{u} = \bm{v}(\theta, z) + t \bm{w} (\theta, z)$, where 
$\bm{w}=(w_t,w_\Gth,w_z)$ and $\bm{v}=(v_t,v_\Gth,v_z)$ are expressed in terms of the last two components of $\bm{v}$ according to the following formulas

\begin{equation*}
w_t = 0, \qquad
w_\theta = \frac{c(\theta) v_\theta - v_{t,\theta}}{A_\theta}, \qquad
w_z = - \frac{v_{t,z}}{A_z},
\qquad
v_t = - \frac{v_{\theta, \theta} + a(\theta) v_z}{c(\theta)},
\end{equation*}
and $v_\theta, v_z$ solve the PDE
\begin{equation} 
\label{4.12}
-A_\theta v_{\theta,z} = A_z (v_{z,\theta} - a(\theta) v_\theta),
\end{equation}
the solvability of which is analyzed below.\\
\noindent \textbf{Case 1:} \textit{Assume 
$$\frac{A_z}{A_\theta} = \frac{H(\theta)}{G(z)}$$
for some $C^1$ functions $H$ and $G$.}\\
It is easy to see that this condition is equivalent to the functions $a(\Gth)$ and $b(\Gth)$ in (\ref{3.52}) being linearly dependent, i.e. there exists a constant scalar $\lambda_0$ such that $b(\theta) = \lambda_0 a(\theta)$ (or $a(\theta) = \lambda_0 b(\theta)$ which can be analyzed analogously). Under this assumption, the PDE in 
(\ref{4.12}) has a general solution
\begin{equation*}
v_z = A_\theta G H \phi_{,z}, \qquad v_\theta = - A_\theta H^2 \phi_{,\theta},
\end{equation*} 
where $\phi$ is an arbitrary smooth function compactly supported on $E.$ Let us now make the choice
$$\phi(\theta, z) = \Phi(n \theta, z) \qquad \text{with} \qquad n = \epsilon^{-\frac{1}{2}} h^{-\frac{1}{4}},$$

\noindent where $\Phi(\theta, z)$ is a smooth, $1$-periodic function in $\theta$ that satisfies the same estimates as $W$ in (\ref{4.3}), moreover the estimates also hold for up to the third order
$\theta$-derivatives of $\Phi.$ The entries in the first row of $e(\bm{u})$ are zero and direct calculations lead to the scaling estimate
\begin{equation} 
\label{4.13}
\|e(\bm{u})\| \leq \frac{Ch^{1/2}}{\epsilon^{3/2}}.
\end{equation}
Next we can compute
\begin{equation*}
[\nabla \bm{u}]_{12} = \frac{\Phi_{,\theta \theta \theta}}{a^2 c \epsilon^{\frac{3}{2}} h^{\frac{3}{4}}} + \frac{o(1)}{\epsilon^{\frac{3}{2}} h^{\frac{3}{4}}},
\end{equation*}
and therefore
\begin{equation} 
\label{4.14}
\|\nabla \bm{u} \| \geq \frac{C(h\epsilon)^{1/2}}{\epsilon^{\frac{3}{2}} h^{\frac{3}{4}}}.
\end{equation}
Putting together (\ref{4.13}) and (\ref{4.14}) we conclude
\begin{equation}
 \label{4.15}
\|e(\bm{u})\|^2 \leq \frac{Ch^{\frac{3}{2}}}{\epsilon} \|\nabla \bm{u} \|^2.
\end{equation}

\noindent \textbf{Case 2:} \textit{Assume there exists an interval $I \subset (0,p)$, such that $a \neq 0$ and $\rho'\neq 0$ on $I$, where $\rho(\theta) = b(\theta) / a(\theta)$.}\\
In this case we find
\begin{equation*}
v_\theta = \frac{1}{a(\theta)} \frac{\partial}{\partial \theta} \left( \frac{\phi_{,\theta}}{\rho'(\theta)} \right),
\qquad
v_z = \frac{B'(z) \phi_{,\theta} + \rho'(\theta) \phi_{,z} - (B(z) + \rho(\theta)) \phi_{,\theta z} }{B'(z) \rho'(\theta)},
\end{equation*}

\noindent where we take

$$\phi(\theta, z) = \eta(\theta, z) \Phi(n \theta, z)$$

\noindent with $\eta$ a smooth $p$-periodic function supported on the set $E_1=\{(\Gth,z) \ : \ \Gth\in I, \ z_1(\Gth)<z<z_2(\Gth)\},$ and $\Phi$ and $n$ are as in Case 1. In this case too 
one can show that the estimates (\ref{4.13}) and (\ref{4.14}) hold true, which then imply (\ref{4.15}).

\section{Rigidity without boundary conditions}
\setcounter{equation}{0}
\label{sec:5}

In this section we give a briefly discussion on how the rigidity changes if one removes the zero Dirichlet boundary conditions on the displacement $\Bu$ on the 
thin part of the boundary of $\partial\Omega^{h,\epsilon}.$ This is particularly transparent in the regime $\epsilon\in [h,\sqrt{h}]$ that we discuss below. Moreover, it is worth mentioning that 
we believe, based on the works [\ref{bib:Fri.Jam.Mue.1},\ref{bib:Fri.Jam.Mue.2},\ref{bib:Fri.Jam.Mor.Mue.},\ref{bib:Hor.Lew.Pak.},\ref{bib:Gra.Har.4},\ref{bib:Harutyunyan.2}], that 
rigidity of shells (thin domains) decreases in the regime $\epsilon<<1$ once one removes the boundary conditions, this is task for future analysis.\\ 

\noindent \textbf{The regime $\epsilon\in [h,\sqrt{h}].$} Assume $S$ has a constant sign Gaussian curvature, i.e., either $K>0$ or $K<0$ on $S^{h,\epsilon}.$ 
Fix a point $p=(\Gth_0,z_0)\in E,$ and consider the principal curve $\Gamma: \gamma(\Gth)=\BR(\Gth,z_0)$ ($\Gth\in [0,1]$) passing through $p.$ 
For each point $\gamma(\Gth)\in S^{h,\epsilon},$ denote the $\pi_{\gamma(\Gth)}$ the tangent plane to $S^{h,\epsilon}$ at $\gamma(\Gth).$ Because $K$ has a constant sign, the surface 
$S^{h,\epsilon}$ will find itself always on the same side of $\pi_{\gamma(\Gth)}$ as the point $\gamma(\Gth)$ moves along the curve $\Gamma.$
We construct a new developable surface $\tilde S^{h,\epsilon}$ as follows: For each point $\gamma(\Gth),$ let $\BGt(\Gth)\subset \pi_{\gamma(\Gth)}$ 
be the normal to the curve $\GG$ (note that $\Bn(\Gth,z_0)$ will be the binormal to $\Gamma$). Next we project the line $l(\Gth)$ passing through the point $p$ and 
having the direction of $\BGt(\Gth)$ onto the $(\Gth,z)$ plane and denote the intersection of the projection with the parametrization set $E$ by $\alpha(\Gth).$ 
Finally we denote the part of $l(\Gth)$ that projects onto the segment $\alpha(\Gth)$ by $\beta(\Gth)$ and consider the union 
$$\tilde S^{h,\epsilon}=\cup_{\Gth\in [0,1]}\beta(\Gth).$$ 
From the definition the surface $\tilde S^{h,\epsilon}$ apparently is as regular as $S^{h,\epsilon}$ and it has zero Gaussian curvature being a union of straight segments. On the 
other hand as the principal curvatures of $S^{h,\epsilon}$ are bounded in the absolute value by the constant $k<\infty$ (see (\ref{2.3})), then the deviation of $S^{h,\epsilon}$ from 
$\tilde S^{h,\epsilon}$ at every point $\Br(\Gth,z)\in S$ will be bounded by $Ck\epsilon^2\leq Ckh$ bearing in mind that $\epsilon\in [h,\sqrt{h}].$ This being said, 
the thin domain $\Omega^{h,\epsilon}$ can be embedded in the shell around $\tilde S^{h,\epsilon}$ with thickness $Ch$ for some fixed $C>0$ for small enough $h>0.$ 
But the rigidity of developable shells with thickness of order $h$ is $h^{-2}$ [\ref{bib:Hor.Lew.Pak.}], i.e, both constants $C$ and $C_1$ in (\ref{1.2}) and (\ref{1.3}) scale like $ch^{-2},$ i.e.,
the result is independent of $\epsilon$, which is in contrast to (\ref{2.7})-(\ref{2.8}). Thus the rigidity dropped upon removing the boundary conditions on $\Bu.$
Note that the same argument works also for developable shells, i.e., in the case of (\ref{2.8}).\\
 
\noindent \textbf{The regime $\epsilon\in [h,\sqrt{h}].$} In this regime the picture is rather clear in the situation of (\ref{2.8}), i.e., when one has $\Gk_z=0$ and 
$|\Gk_\Gth|>0$ on $E.$ In this case we again have that the rigidity of $\Omega^{h,\epsilon}$ without boundary conditions on $\Bu$ is $h^{-2}$ [\ref{bib:Hor.Lew.Pak.}],
while with boundary conditions it is $\epsilon h^{-3/2},$ which is evidently much bigger.

\section*{Acknowledgements}
D. H. would like to thank National Science Foundation for support under Grants No. DMS-1814361.

\end{document}